\def\({\left (}
\def\){\right )}
\def\<{\left\langle}
\def\>{\right\rangle}
 \newtheorem{thm}{Theorem}[section]
\newtheorem{lem}[thm]{Lemma}
\newtheorem{defn}[thm]{Definition}
\newtheorem{rem}[thm]{Remark}
\newcommand{\norm}[1]{\left\Vert#1\right\Vert}
\newcommand{\abs}[1]{\left\vert#1\right\vert}
\newcommand{\set}[1]{\left\{#1\right\}}
\newcommand{\Real}{\mathbb R}
\newcommand{\pfrac}[2]{\frac{\partial #1}{\partial #2}}
\begin{document}
\title{On the finite time blow-up of biharmonic map flow in dimension four}

\numberwithin{equation}{section}
\author{Lei Liu and Hao Yin}

\address{School of Mathematical Sciences,
University of Science and Technology of China, Hefei, China}
\email{LLEI1988@mail.ustc.edu.cn}
\email{haoyin@ustc.edu.cn }

\begin{abstract}
	In this paper, we show that for certain initial values, the (extrinsic) biharmonic map flow in dimension four must blow up in finite time.
\end{abstract}

\subjclass{58E20(35J50 53C43)} \keywords{biharmonic map flow, finite blow-up, neck analysis.}

 \maketitle

 \pagestyle{myheadings} \markright {finite time blow-up}

\section{Introduction}

Let $(M,g)$ be a closed Riemannian manifold of dimension four and $(N,h)$ be another closed Riemannian manifold, which is isometrically embedded in $\Real^N$. The critical points of the following functional
\begin{equation*}
	E(u)=\int_M \abs{\triangle u}^2 dv
\end{equation*}
are called (extrinsic) biharmonic maps.
We also define
\begin{equation*}
	\mathcal E(u)=\int_M \abs{\nabla^2 u}^2 +\abs{\nabla u}^4 dv
\end{equation*}
and notice that since the target manifold is compact, we can bound $\mathcal E(u)$ by $E(u)$.

The associated heat flow of $E(u)$ was first studied by Lamm \cite{Lamm}. In \cite{Lamm}, the author proved that in dimension four, the following evolution equation
\begin{equation}\label{eqn:flow}
	\partial_t u =-\triangle^2 u +\triangle(B(u)(\nabla u,\nabla u)) +2\nabla \langle\triangle u\nabla P(u)\rangle -\langle \triangle P(u),\triangle u \rangle
\end{equation}
has a local solution for all smooth initial value. Here $B$ is the second fundamental form of $N\subset \Real^N$ and $P(u)$ is the projection to the tangent space $T_uN$. Moreover, the solution is global if the $W^{2,2}$ norm of the initial value is small.  Following the famous work of Struwe on harmonic map flow \cite{Struwe}, Gastel \cite{Ga} and Wang \cite{W07} showed the existence of a global weak solution with at most finitely many singular times.

It is a natural question whether the flow develops finite singularity. The problem is particularly interesting given that all weak biharmonic maps with bounded $W^{2,2}$ norm in dimension four are known to be smooth (see \cite{W04}). The corresponding problem for harmonic map flow was answered by Chang, Ding and Ye \cite{CDY}. After that, more finite-time singularity examples were found by Topping \cite{topping}, Li and Wang \cite{LW} and very recently by Chen and Li \cite{CL}. The last construction shows that the blow-up could be forced by topological reason and its proof relies on the no neck theorem for approximate harmonic maps of Qing and Tian \cite{QT}. In fact, it was pointed out by Qing and Tian that the no neck theorem could be used in showing finite time blow-up.

Recently, the authors proved the no neck theorem for the blow-up of a sequence of (extrinsic) biharmonic maps with bounded energy. In light of \cite{CL}, it is very natural to move the argument to the case of biharmonic map flow and this is the purpose of this paper. Precisely, we show
\begin{thm}
	\label{thm:main}
	Suppose that $M'$ is any closed manifold of dimension $m>4$ with nontrivial $\pi_4(M')$ and let $M=M' \# T^m$ be the connected sum of $M'$ with the torus of the same dimension. For any Riemannian metric $g$ on $M$, we can find (infinitely many) initial map $u_0:S^4\to M$ such that the biharmonic map flow (\ref{eqn:flow}) starting from $u_0$ develops finite time singularity.
\end{thm}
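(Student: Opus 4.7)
\emph{Strategy.} I would argue by contradiction: suppose the biharmonic map flow from a suitably constructed initial datum $u_0$ stays smooth for all $t \in [0, \infty)$. The energy dissipation identity
\[ E(u_0) - E(u(T)) = \int_0^T \int_{S^4} |\partial_t u|^2 \, dv \, dt \]
forces a sequence $t_n \to \infty$ along which $\norm{\partial_t u(t_n, \cdot)}_{L^2} \to 0$. The maps $u_n := u(t_n, \cdot)$ are then approximate (extrinsic) biharmonic maps with bounded $W^{2,2}$-energy, so by the concentration-compactness theory of Lamm, Gastel and Wang they subconverge to a weak limit $u_\infty : S^4 \to M$ together with finitely many bubbles $\omega_i : S^4 \to M$ extracted at the concentration points, accompanied by the energy inequality
\[ E(u_0) \ge E(u_\infty) + \sum_i E(\omega_i). \]

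\emph{Role of the topology and choice of $u_0$.} Since $T^m$ is a $K(\mathbb{Z}^m,1)$, the inclusion $M' \setminus B \hookrightarrow M = M' \# T^m$ sends any nontrivial $\alpha \in \pi_4(M')$ to a nontrivial class in $\pi_4(M)$, while the infinite group $\pi_1(M) = \pi_1(M') \ast \mathbb{Z}^m$ gives the universal cover $\tilde M$ a tree-like structure built from infinitely many copies of $\tilde{M'}$ joined through lifts of the torus piece. Starting from an almost-minimizer $\omega : S^4 \to M'$ for $E$ in class $\alpha$ and transplanting it through the connected-sum neck, I would build $u_0$ representing $\alpha$ with $E(u_0)$ close to the class infimum, and such that the (unique) lift $\tilde u_0 : S^4 \to \tilde M$ has image contained in a single chosen copy $\tilde{M'}_{\!0}$ of $\tilde{M'}$. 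The ``infinitely many'' choices in the theorem correspond to translating $\tilde{M'}_{\!0}$ through the $\mathbb{Z}^m$-action on $\tilde M$.

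\emph{Applying no-neck and the main obstacle.} The crucial input is the no-neck theorem established by the authors in their earlier work on sequences of biharmonic maps in dimension four. Combined with standard $\eps$-regularity away from the concentration points, it upgrades the energy identity above to $C^0$-convergence on the neck regions connecting bubbles to the body, which then yields the homotopy identity
\[ [u_0] = [u_\infty] + \sum_i [\omega_i] \quad \text{in } \pi_4(M) \]
and forces the lifts $\tilde u_\infty$ and all $\tilde \omega_i$ to lie in a bounded neighborhood of the single copy $\tilde{M'}_{\!0}$. The tightness of $E(u_0)$ relative to the class infimum in $\tilde{M'}_{\!0}$, combined with the minimal-energy gap for non-constant biharmonic maps into $\tilde{M'}$, should then rule out every admissible decomposition into body plus bubbles, giving the desired contradiction. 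The main obstacle I anticipate is promoting the no-neck theorem from genuine biharmonic maps to the approximate maps $u_n$ produced by the flow: one must rerun the neck pointwise decay estimates while absorbing the additional term $\partial_t u_n$ via its $L^2$-smallness and the fourth-order elliptic regularity of the biharmonic operator.
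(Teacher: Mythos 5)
Your proposal correctly identifies the common skeleton: argue by contradiction, use the energy dissipation to extract $t_n \to \infty$ with $\norm{\partial_t u(t_n)}_{L^2} \to 0$, treat $u_n := u(t_n)$ as approximate biharmonic maps, and apply a no-neck theorem (which indeed must first be extended from exact to approximate biharmonic maps, as you anticipate). But the mechanism that actually produces the contradiction in your proposal has a genuine gap, and the paper uses a different and essential device that your argument misses entirely.

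Your contradiction mechanism is energetic: you take $u_0$ to be a near-minimizer of $E$ in its homotopy class, hope that the no-neck theorem yields a homotopy identity $[u_0] = [u_\infty] + \sum_i [\omega_i]$, and then invoke a ``minimal-energy gap'' to rule out the decomposition. This does not close. Even granting the homotopy identity and an energy gap for nonconstant biharmonic maps, the decomposition $E(u_0) \ge E(u_\infty) + \sum_i E(\omega_i)$ with, say, $u_\infty$ constant and a single nonconstant bubble $\omega_1$ in class $\alpha$ is perfectly consistent with $u_0$ being a near-minimizer in class $\alpha$. No contradiction arises. (The Chang--Ding--Ye argument for harmonic maps into $S^2$ works only because of the precise $4\pi$-quantization and degree additivity on $S^2$; nothing of that precision is available here, nor is it used by the paper.) Moreover, constructing $u_0$ so that its lift sits inside a single copy $\tilde{M'}_0$ is precisely the wrong normalization: such a $u_0$ carries no information that bubbling could fail to preserve.

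The device the paper actually uses is a \emph{width} invariant $W(u)$, defined by lifting $u$ to a specifically constructed (non-universal) cover $\tilde M$ built by gluing copies of $M'\setminus V$ into $\Real^m$ along a $\mathbb Z^m$-orbit, and taking the diameter of the lifted image. Two facts are proved: (i) a compactness-plus-no-neck argument (Lemma~\ref{lem:finiteR}) shows that every biharmonic map from $S^4$ or $\Real^4$ into $M$ with energy $\le C_1$ has width $\le C_2(C_1)$, and the same bound survives bubbling because necks have vanishing oscillation; (ii) one can construct $u_0$ with $\mathcal E(u_0) < C_1$ whose lift is deliberately \emph{stretched} through a long conformal annulus $B_\sigma \setminus B_{\sigma^2}$ from a near copy $W_0$ to a far copy $W_i$ of $M'$ in $\tilde M$ --- the logarithmic annulus trick keeps the $\triangle u$ energy of the connecting tube of order $L^2/|\log\sigma|$, hence as small as desired while the geodesic length $L$ is as large as desired --- and a topological projection argument shows every $u'$ homotopic to $u_0$ must still have its lift hit both $W_0$ and $W_i$, so $W(u') > C_3$. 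The contradiction is then $C_3 < W(u_k) \le \sum_i W(\omega_i) + W(u_\infty) + 1 \le C_4(C_1)$. This is a topological obstruction decoupled from energy, and it is the step your proposal is missing.
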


As remarked earlier, the proof relies on the idea of \cite{CL}. However, we give a slightly different presentation. Since we are less ambitious in proving the most general theorems, our assumption on the topology of $M$ enables us to be more specific in the construction. Moreover, we define and use the concept of the width of a biharmonic map $u$ from $S^4$ to $M$. Very roughly, the idea of the proof is the following. By a compactness argument, we show that the width of biharmonic maps from $S^4$ to $M$ is bounded by a constant depending on the energy of the map (and the geometry of $M$ of course). However, we can construct initial map $u_0$ with bounded energy but in a homotopy class in which every smooth representation must have very large width. If no finite-time singularity occurs, we may choose a sequence $t_i\to \infty$ such that the bi-tension field of $u(t_i)$ goes to zero in $L^2$ norm. Hence, $u(t_i)$ is a sequence of approximate biharmonic maps. $u(t_i)$ either converges to a smooth biharmonic map in the same homotopy class, which is not possible because the energy of the limit is smaller than that of $u_0$, or blows up. In the latter case, the total number and energy of each bubble, as well as the weak limit is bounded and the no neck theorem (Theorem \ref{thm:noneck}) implies a contradiction as well.

The rest of the paper is organized as follows. In Section \ref{sec:neck}, we generalize the no neck result in \cite{ours} to the case of approximate biharmonic maps. The generalization is in two directions. The first is to involve a non-zero bi-tension field and the second is to show the neck analysis works on round sphere instead of flat domains in $\Real^4$.

\begin{rem}
	For many PDE theorems, especially about regularity of geometric PDE, the curvature of the domain is not essential. Hence, it suffices to prove the theorem in the case of domains of Euclidean space. In this paper, we think it may not be very obvious that the neck analysis of biharmonic maps works on curved space. Hence, we present a detailed proof in the case of round metric on $S^4$, which is needed by the proof of Theorem \ref{thm:main}.

	In spite of the complexity caused by the round metric, we still believe that the neck analysis works in general. However, that would require greater efforts. We also note that this is not an issue for the neck analysis of harmonic maps, because of the conformal invariance.
\end{rem}

In Section \ref{sec:width}, the width of a map from $S^4$ to $M$ is defined and the width of biharmonic maps from both $S^4$ and $\Real^4$ are bounded by the energy. Finally, Theorem \ref{thm:main} is proved in Section \ref{sec:proof}.

\begin{rem}
	Recently, we notice that Breiner and Lamm \cite{BLamm} proved a no neck theorem for a sequence of biharmonic maps with bi-tension fields in $L\log L$ when the target manifold is a sphere. In this paper, by approximate biharmonic maps, we mean bi-tension field is bounded in $L^2$.
\end{rem}

\section{No neck for approximate biharmonic maps}\label{sec:neck}
In this section, we show that the main result of \cite{ours} can be generalized to a sequence of approximate biharmonic maps $u_i$ defined on $S^4$.

We use a subscript $g$ to denote operators defined on $S^4$ with round metric, such as $\triangle_g$ and $\nabla_g$. $\triangle$ and $\nabla$ are reserved for the Laplace and gradient with respect to the flat metric given by normal coordinates around some point in $S^4$. We always take the normal coordinates $x$ so that the scaling $u(\lambda x)$ is well defined for small $\lambda$. Moreover, due to the Gauss Lemma, the geodesic ball $B_r$ is the same as the ball of radius $r$ with respect to the flat metric given by the normal coordinates. Finally, there is no need to distinguish the $L^p$ norm for our purpose.

We will prove
\begin{thm}
	\label{thm:noneck}
	Let $u_i$ be a sequence of approximate biharmonic maps from ${B}^4$ to $N$ satisfying
\begin{equation}\label{eqn:ELex}
	\triangle_g^2 u= \triangle_g( B(u)(\nabla_g u, \nabla_g u)) +2 \nabla_g \cdot \langle \triangle_g u, \nabla(P(u))\rangle -\langle \triangle_g (P(u)),\triangle_g u \rangle+\tau(u).
\end{equation} with
	\begin{equation}\label{eqn:totalenergy}
		\int_{\tilde{B}_1} \abs{\nabla_g^2 u_i}^2 +\abs{\nabla_g u_i}^4 dv_g <\Lambda\quad and \quad \|\tau(u_i)\|_{L^p(B_1)}<\Lambda
	\end{equation}
	for some $\Lambda>0$ and $p\geq\frac{4}{3}$.
	Assume that there is a positive sequence  $\lambda_i\to 0$ such that
	\begin{equation*}
		u_i(\lambda_i x)\to \omega
	\end{equation*}
	on any compact set $K\subset \Real^4$, that $u_i$ converges weakly in $W^{2,2}$ to $u_\infty$ and that $\omega$ is the only bubble. Then,
	\begin{equation}\label{eqn:noneck2}
		\lim_{\delta\to 0} \lim_{R\to \infty} \lim_{i\to \infty} \mbox{osc}_{B_\delta(0)\setminus B_{\lambda_i R}(0)} u_i =0.
	\end{equation}
\end{thm}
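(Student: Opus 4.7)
The plan is to adapt the no-neck scheme the authors developed in \cite{ours} to handle the two new ingredients here simultaneously: the round metric on $S^4$ and a bi-tension field with $\tau(u)\in L^p$ for $p\geq 4/3$. In the normal coordinates chosen around $0$, we have $g_{ij}=\delta_{ij}+O(|x|^2)$, so $\triangle_g$ and $\nabla_g$ differ from their flat counterparts only by commutators carrying explicit powers of $|x|$. These will be treated as lower-order error terms in a dyadic analysis.

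First, I would decompose the neck into dyadic annuli $A_k=B_{2^{-k+1}}\setminus B_{2^{-k}}$ for $k$ ranging over the neck scales. The one-bubble hypothesis, as usual, implies that after choosing $R$ large and $\delta$ small the scale-invariant energy on each $A_k$ is at most a prescribed small $\varepsilon$. I would then apply $\varepsilon$-regularity for approximate biharmonic maps with $L^p$ bi-tension field to obtain
\[
2^{-jk}\|\nabla^j u_i\|_{L^\infty(A_k)}\le C\bigl(\varepsilon^{1/4}+2^{-k(4-4/p)}\|\tau(u_i)\|_{L^p(\widetilde A_k)}\bigr),\qquad j=1,2,3,
\]
where $\widetilde A_k$ is a slightly thickened annulus. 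The hypothesis $p\geq 4/3$ is exactly what makes $4-4/p\geq 1$, so the $\tau$-contribution is summable in $k$.

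The heart of the argument is a Pohozaev-type identity obtained by pairing (\ref{eqn:ELex}) with $x\cdot\nabla u$ on each $A_k$ and integrating by parts. As in \cite{ours}, the goal is to show that a tangential-energy quantity on the boundary circles controls a fixed multiple of the full energy on $A_k$, up to boundary contributions on the two adjacent circles and error terms schematically of the form $r\|\tau\|_{L^p}\cdot(\text{energy})^{1/2}+r^2\cdot(\text{energy})$ coming, respectively, from $\tau$ and from the curvature corrections. Together with the $\varepsilon$-regularity bounds, this produces a three-annulus inequality whose iteration forces the dyadic energy to decay exponentially away from both endpoints of the neck. This in turn yields $2^{-k}\|\nabla u_i\|_{L^\infty(A_k)}\leq Ca^{|k-k_0|}+C2^{-k\alpha}$ for some $a<1,\ \alpha>0$, and summing the obvious estimate $\mathrm{osc}_{A_k}u_i\leq C\cdot 2^{-k}\|\nabla u_i\|_{L^\infty(A_k)}$ over the neck gives (\ref{eqn:noneck2}).

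\textbf{Main obstacle.} The delicate point is verifying the Pohozaev identity in the curved setting. Each replacement of $\triangle$ by $\triangle_g$ produces Christoffel-type commutators, and since the right-hand side of (\ref{eqn:ELex}) is already quadratic in $\nabla u$ and $\nabla^2 u$, these commutators propagate into a long list of correction terms. The bookkeeping to confirm that every such correction carries an explicit factor of $|x|^a$ with $a\geq 1$ (so that after $\varepsilon$-regularity it sums as a geometric series) is the most tedious step, and is presumably the reason the authors think the curved case deserves a detailed presentation. By contrast, the $\tau(u)$ contribution is well-behaved precisely at the threshold $p=\frac{4}{3}$, and the rest of the scheme from \cite{ours} goes through essentially unchanged.
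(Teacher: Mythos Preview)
Your outline has two genuine gaps, and it also diverges structurally from the paper in a way worth noting.

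\medskip
\textbf{Gap 1: the claimed pointwise bounds are unavailable.} You assert
\[
2^{-jk}\|\nabla^j u_i\|_{L^\infty(A_k)}\le C\bigl(\varepsilon^{1/4}+2^{-k(4-4/p)}\|\tau(u_i)\|_{L^p(\widetilde A_k)}\bigr),\qquad j=1,2,3.
\]
The $\varepsilon$-regularity available here (Theorem~\ref{thm:regularity}) yields only $W^{4,p}$ control with the \emph{same} $p$ as the bi-tension field. In dimension four, $W^{4,p}\hookrightarrow C^{j}$ requires $4-4/p>j$; at the threshold $p=4/3$ this fails already for $j=2$, and for $j=3$ it would need $p>4$. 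The paper explicitly remarks that pointwise decay for tangential derivatives is \emph{not} available and replaces it by an $L^2$ estimate (Lemma~\ref{lem:point}). Your Pohozaev bookkeeping, as written, relies on these $L^\infty$ bounds and therefore does not close at $p=4/3$.

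\medskip
\textbf{Gap 2: the perturbative treatment of the round metric.} You propose to write $\triangle_g^2=\triangle^2+(\text{commutators})$ and check that each commutator carries a factor $|x|^a$, $a\ge1$. The difficulty is that such commutators contain \emph{third}-order terms with $O(|x|)$ coefficients; when paired with $x\cdot\nabla u$ in the Pohozaev step these are not controlled by the energy and do not produce a summable geometric series. The paper avoids this entirely: instead of perturbing, it uses the exact form of the round metric, introduces the variable $t$ with $dt=\frac{dr}{\sin r}$, and computes
\[
\triangle_g^2 u=f^{-4}\bigl((\partial_t^2+\triangle_{S^3})^2-4\partial_t^2\bigr)u+2\triangle_g u,\qquad f=\sin r.
\]
The point of this identity is that the curvature correction is \emph{exactly} $2\triangle_g u$, involving only first and second derivatives and hence absorbable into $\tilde\tau=\tau-2\triangle_g u\in L^p$. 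The paper remarks explicitly that if a third-order term appeared here ``the proof below would fail''; your perturbative scheme would produce precisely such terms.

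\medskip
\textbf{Structural difference.} Beyond these gaps, the paper does not run a single Pohozaev argument for the full energy. It first subtracts the spherical mean, $w_i=u_i-u_i^*$, shows $w_i$ is an $\eta$-approximate biharmonic \emph{function} in the sense of Definition~\ref{defn:approx}, and obtains exponential decay of the tangential quantity $F_l(w_i)$ via the three-circle Theorem~\ref{thm:tcapprox}. Only after this tangential decay is established does a Pohozaev/ODE argument control the radial part $\int_{S^3}(\frac32|\partial_t^2u|^2+2|\partial_t u|^2)$. Your unified Pohozaev route skips the first step; even in the flat case of \cite{ours}, that step is what makes the radial ODE close.
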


\begin{rem}
	In Theorem \ref{thm:noneck}, we assume that there is only one bubble. The same result holds in the case of multiple bubbles. The proof is routine argument by now and hence is omitted.
\end{rem}

The proof is similar to the proof of Theorem 1.1 in \cite{ours}, which we outline below.
We first recall some definitions and results, which are modified only slightly.

\subsection{minor modifications}

The following is a modified version of $\varepsilon-$regularity, proved in the Appendix of \cite{ours}.
\begin{thm}[$\varepsilon_0$-regularity]\label{thm:regularity}
Let $u\in W^{4,p}(B_1)(p>1)$ be an approximate biharmonic map. There exists $\varepsilon_0>0$ such that if $\int_{B_1}|\nabla^2 u|^2+|\nabla u|^4dx\leq\epsilon_0$ then
\begin{eqnarray*}
\|u-\overline{u}\|_{W^{4,p}(B_{1/2})}\leq C(\|\nabla^2u\|_{L^2(B_1)}+\|\nabla u\|_{L^4(B_1)}+\|\tau(u)\|_{L^p(B_1)}),
\end{eqnarray*}
where $\overline{u}$ is the mean value of $u$ over $B_1$.
\end{thm}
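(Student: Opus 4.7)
The plan is to treat \eqref{eqn:ELex} as a fourth-order semilinear elliptic equation and combine interior $L^p$ Calder\'on--Zygmund theory for $\triangle^2$ with a smallness absorption. I replace $u$ by $\tilde u := u - \bar u$, so Poincar\'e--Sobolev on $B_1$ converts the $W^{2,2}\cap W^{1,4}$ smallness into smallness of $\|\tilde u\|_{L^q}$ for every finite $q$, as well as smallness of $\|\nabla u\|_{L^4}$ and $\|\nabla^2 u\|_{L^2}$. A standard cutoff $\eta\in C_c^\infty(B_1)$ with $\eta\equiv 1$ on $B_{1/2}$ then reduces the interior estimate on $B_{1/2}$ to a whole-space estimate for $v := \eta\tilde u$; the commutator $[\triangle^2,\eta]$ produces only lower-order terms supported in $B_1\setminus B_{1/2}$, harmless by interpolation against the critical norms.

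The main step is to rewrite the right-hand side of \eqref{eqn:ELex} in the divergence form
\begin{equation*}
\triangle^2 u = \triangle F_0 + \nabla\cdot F_1 + F_2 + \tau(u),
\end{equation*}
where $F_0 = B(u)(\nabla u,\nabla u)$, $F_1 = 2\langle\triangle u,\nabla P(u)\rangle$ and $F_2 = -\langle\triangle P(u),\triangle u\rangle$, and then to estimate each piece in $L^p$ (or a negative Sobolev norm) against the small critical norms by H\"older. The first two are immediate: $\|F_0\|_{L^p} \leq C\|\nabla u\|_{L^4}^2$ for $p \leq 2$, and $\|F_1\|_{L^p} \leq C\|\triangle u\|_{L^2}\|\nabla u\|_{L^4}$ for $p \leq 4/3$. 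Feeding these, together with $F_2$ handled below, into the linear biharmonic estimate $\|v\|_{W^{4,p}} \leq C(\|\triangle^2 v\|_{L^p} + \|v\|_{L^p})$ produces an inequality with a factor proportional to $\varepsilon_0^{1/2}$ in front of the norm of $v$ we wish to control; choosing $\varepsilon_0$ small enough absorbs this factor into the left-hand side. This gives the base case $p = 4/3$, and larger $p > 1$ follows by a standard bootstrap once the first round upgrades the integrability of $\nabla u$ and $\nabla^2 u$.

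The main obstacle is the critical term $F_2$. Expanding, $\triangle P(u) = P''(u)(\nabla u,\nabla u) + P'(u)\triangle u \in L^2$, and paired with $\triangle u \in L^2$ the product lies a priori only in $L^1$, outside $L^p$ for any $p > 1$. Two complementary observations rescue it. First, in dimension four one has $L^1 \hookrightarrow W^{-2,p}$ for every $p < 2$ by Sobolev duality (since $W^{2,p'} \hookrightarrow L^\infty$ when $p' > 2$), so placing $F_2$ in $W^{-2,p}$ yields an initial $W^{2,p}$ gain at the cost of only a small constant $C\varepsilon_0$. Second, exploiting the geometric identities $P(u)^2 = P(u)$ and $(I - P(u))\triangle u = -B(u)(\nabla u,\nabla u)$, one can peel off an additional divergence structure from $F_2$ in the spirit of Lamm--Rivi\`ere, producing an extra small critical factor. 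Combining these two gains with the bootstrap above upgrades the $W^{2,p}$ regularity all the way to the claimed $W^{4,p}$ bound. Handling this borderline term is the delicate technical heart of the argument.
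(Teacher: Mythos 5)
The paper does not actually contain a proof of Theorem~\ref{thm:regularity}; it only cites the Appendix of \cite{ours}. So there is nothing here to compare against verbatim, and I am judging your sketch on its own terms.

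The key point you overlook is that the statement is an \emph{a priori} estimate, not a regularity upgrade: the hypothesis is $u\in W^{4,p}(B_1)$, so all terms on the right-hand side of \eqref{eqn:ELex} already live in $L^p$; the content of the theorem is a \emph{quantitative} bound with absorption. For the ``critical'' term $F_2=-\langle\triangle P(u),\triangle u\rangle$, the pieces $(\nabla^2 u)^2$ and $|\nabla u|^2\,\nabla^2 u$ are handled directly by Gagliardo--Nirenberg on $B_1$. For instance, for $1<p\le 2$,
\begin{equation*}
\|\nabla^2 u\|_{L^{2p}(B_1)}\le C\,\|\nabla^4 u\|_{L^p(B_1)}^{1/2}\,\|\nabla^2 u\|_{L^2(B_1)}^{1/2}+C\,\|\nabla^2 u\|_{L^2(B_1)},
\end{equation*}
so that $\|(\nabla^2 u)^2\|_{L^p(B_1)}\le C\sqrt{\varepsilon_0}\,\|u-\bar u\|_{W^{4,p}(B_1)}+C\|\nabla^2 u\|_{L^2}^2$, which is exactly the small-coefficient term you want to absorb; the other products admit analogous H\"older--GN splits with small factors $\varepsilon_0^{1/4}$ or $\varepsilon_0^{1/2}$. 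So there is no ``borderline'' obstruction, and no need for the two rescue mechanisms you invoke.

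Worse, the two rescues you propose do not actually work as stated. The embedding $L^1(B_1)\hookrightarrow W^{-2,p}(B_1)$ for $p<2$ yields at best $v\in W^{2,p}$ with $p<2$, which is strictly weaker than the standing assumption $u\in W^{2,2}$; it supplies no gain in integrability with which to launch a bootstrap. And the Lamm--Rivi\`ere-type divergence decomposition of $F_2$ is asserted, not carried out; that machinery is what one needs to prove smoothness of genuine $W^{2,2}$-weak biharmonic maps (as in \cite{W04}), but it is overkill and, as sketched, incomplete for this a priori bound. If you restructure around the qualitative $W^{4,p}$ hypothesis --- cutoff, whole-ball Calder\'on--Zygmund for $\triangle^2$, GN interpolation with small factors from $\|\nabla^2 u\|_{L^2}$ and $\|\nabla u\|_{L^4}$, absorption via a chain of balls or a standard continuity/iteration argument --- you obtain the estimate directly for all $p>1$ without any special treatment of $F_2$.
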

\begin{rem}
	We may very well use $\nabla_g$ in the above lemma. It is the type of result that Riemannian metric does not make any difference.
\end{rem}

Next, we modify the definition of $\eta-$approximate biharmonic map as follows.
\begin{defn}\label{defn:approx}
Let $u$ be a smooth function defined on $B_{r_2}\setminus B_{r_1}$, $u$ is called an $\eta-$approximate biharmonic function if it satisfies
\begin{eqnarray}\label{eqn:approx}
\triangle_g^2u (r,\theta)&=& a_1\nabla_g\triangle_g u+a_2\nabla_g^2 u+a_3\nabla _gu+a_4 u \\ \nonumber
&& +
\frac{1}{\abs{\partial B_r}}\int_{\partial B_r}{b_1\nabla_g\triangle_g u+b_2\nabla_g^2 u+b_3\nabla_g u+b_4 u}d\sigma+h(x).
\end{eqnarray}
where $a_i,b_i$ and $h$ are smooth functions satisfying, for any $\rho\in [r_1,r_2/2]$,

(a) $\norm{g_{ij}(\rho x)-\delta_{ij}}_{C^4(B_2\setminus B_1)}<\eta$. Namely, the metric after scaling to $B_2\setminus B_1$ is close to the flat metric in $C^4$ norm.

(b)
\begin{equation}
\||x|^{4(1-1/p)}h\|_{L^p(B_{r_2}\setminus B_{r_1})}\leq\eta
	\label{eqn:h}
\end{equation}

and

(c)
\begin{equation*}
	\sum_{i=1}^4 \norm{a_i}_{L^{4/i}(B_{2\rho}\setminus B_{\rho})} +\norm{b_i}_{L^{4/i}(B_{2\rho}\setminus B_{\rho})} \leq \eta.
\end{equation*}
\end{defn}

\begin{rem}
	One can check that if $u$ is an $\eta-$approximate biharmonic function on $B_{r_2}\setminus B_{r_1}$, then $w(x)=u(\frac{x}{\lambda})$ is another $\eta-$approximate biharmonic function on $B_{\lambda r_2}\setminus B_{\lambda r_1}$.
\end{rem}

The following is a version of interior $L^p$ estimate for approximate biharmonic function. It is used in the proof of three circle lemma.
\begin{lem}
	\label{lem:lp}
	Suppose that $u: B_4\setminus B_1\to \mathbb{R}$ is a $\eta-$approximate biharmonic function(for small $\eta$) with
	\begin{equation*}
		\sum_{i=1}^4 \norm{a_i}_{L^{4/i}(B_4\setminus B_1)} +\norm{b_i}_{L^{4/i}(B_4\setminus B_1)}\leq \eta \quad \mbox{and}\quad  \norm{h}_{L^p(B_4\setminus B_1)}\leq C.
	\end{equation*}
	Then, for any $p>1$, we have
	\begin{equation*}
		\norm{u}_{W^{4,p}(B_3\setminus B_2)}\leq C (\norm{u}_{L^p(B_4\setminus B_1)}+\norm{h}_{L^p(B_4\setminus B_1)}).
	\end{equation*}
\end{lem}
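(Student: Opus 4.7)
The plan is to treat this as standard interior $L^p$ theory for the biharmonic operator on a smooth annulus: apply Calder\'on--Zygmund estimates for the flat $\Delta^2$ with cutoff functions, bound every term on the right-hand side of \eqref{eqn:approx} in $L^p$ by combining H\"older and Sobolev embedding, and close the estimate by a nested-annulus iteration using the smallness of $\eta$.

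Concretely, I would first record the baseline
\begin{equation*}
\|u\|_{W^{4,p}(B_3\setminus B_2)}\le C\bigl(\|\Delta^2 u\|_{L^p(B_{7/2}\setminus B_{3/2})}+\|u\|_{L^p(B_{7/2}\setminus B_{3/2})}\bigr),
\end{equation*}
which is standard interior theory for $\Delta^2$ on a smooth bounded domain via nested cutoffs. Assumption (a) makes $g$ at most $\eta$-close to the Euclidean metric in $C^4$, so $(\Delta_g^2-\Delta^2)u$ is a first-to-third order operator applied to $u$ with $C\eta$-small coefficients, which can be grouped with the $a_i,b_i$ terms for later absorption.

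Next, for each pointwise piece $a_i\nabla_g^{4-i}u$ of \eqref{eqn:approx}, H\"older gives $\|a_i\nabla_g^{4-i}u\|_{L^p}\le\|a_i\|_{L^{4/i}}\|\nabla_g^{4-i}u\|_{L^{\sigma_i}}$ with $1/\sigma_i=1/p-i/4$, and the Sobolev embedding $W^{i,p}\hookrightarrow L^{\sigma_i}$ on a four-dimensional domain then gives $\|\nabla_g^{4-i}u\|_{L^{\sigma_i}}\le C\|u\|_{W^{4,p}}$. Combining these, each pointwise term contributes at most $C\eta\|u\|_{W^{4,p}}$. The averaged pieces with coefficients $b_i$ satisfy the same bound because the radial average $f\mapsto |\partial B_r|^{-1}\int_{\partial B_r} f\,d\sigma$ is bounded on $L^p$ of any annulus by Jensen on each spherical slice. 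The forcing $h$ is in $L^p$ by hypothesis.

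Putting these ingredients together yields
\begin{equation*}
\|u\|_{W^{4,p}(B_3\setminus B_2)}\le C\eta\,\|u\|_{W^{4,p}(B_{7/2}\setminus B_{3/2})}+C\bigl(\|u\|_{L^p(B_4\setminus B_1)}+\|h\|_{L^p(B_4\setminus B_1)}\bigr),
\end{equation*}
and running the argument on a shrinking family of nested annuli and summing geometrically absorbs the $\eta$-term once $\eta$ is sufficiently small. The main obstacle is the borderline exponent case, most pointedly $i=4$, where $\|a_4\|_{L^1}$ alone does not fit the uniform H\"older recipe: there one uses $W^{4,p}\hookrightarrow C^0$ (valid because $p>1$) to place $u$ in $L^\infty$, and must rearrange the H\"older pair accordingly, with analogous small adjustments whenever $ip\ge 4$. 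Beyond this and the bookkeeping of the nested-annulus iteration, the proof is routine elliptic regularity.
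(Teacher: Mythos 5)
Your strategy---nested cutoffs, the interior $L^p$ estimate for $\triangle^2$, H\"older paired with the borderline Sobolev embedding to reduce each $a_i$-term to $\eta\norm{u}_{W^{4,p}}$, Jensen's inequality on the spherical averages for the $b_i$-terms, and a final absorption---is essentially the proof given in the paper, differing only in the absorption device. The paper tracks the weighted seminorms $\Psi_j=\sup_{0\le\sigma\le 1}(1-\sigma)^j\norm{\nabla^j u}_{L^p(A_\sigma)}$, derives $\Psi_4\le C(\Psi_3+\Psi_2+\Psi_1+\Psi_0+\norm{h}_{L^p})$, and closes via the interpolation $\Psi_j\le\epsilon^{4-j}\Psi_4+C\epsilon^{-j}\Psi_0$; your Giaquinta-type shrinking-annuli iteration with geometric summing is the standard interchangeable version of the same mechanism.

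One caveat concerning your treatment of the borderline exponents. You correctly flag the $i=4$ case, but the repair you sketch does not actually close: with only $a_4\in L^1$, the sole way to place $a_4 u$ in $L^p$ by H\"older is $\norm{a_4 u}_{L^p}\le\norm{a_4}_{L^p}\norm{u}_{L^\infty}$, and this needs $a_4\in L^p$, which is strictly stronger than $L^1$ for every $p>1$. Putting $u\in L^\infty$ via $W^{4,p}\hookrightarrow C^0$ yields $a_4 u\in L^1$, not $L^p$, and no further Sobolev regularity on $u$ changes that. The same tension appears at $i=3$ once $p\ge 4/3$, which is precisely the range of $p$ the paper uses later. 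To be fair, the paper's own proof only writes out the $a_1$-term and then states that ``similar argument applies to the remaining terms,'' so this is a shared lacuna rather than a defect specific to your write-up; in the paper's actual application the coefficients $\beta_i$ are built from the $\varepsilon_0$-regularity estimate and therefore enjoy slightly better integrability than the bare $L^{4/i}$ hypotheses record, which is what lets the H\"older pairing close in practice.
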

\begin{proof}
Without loss of generality, we assume the metric $g$ is the standard Euclidean metric. The main idea is similar to the lemma 3.3 in \cite{ours}, but the assumptions on $a_i$ and $b_i$ are different from \cite{ours}. Next, we sketch the proof here.

	For $0< \sigma<1$, set $A_\sigma= B_{3+\sigma}\setminus B_{2-\sigma}$ and $A'_\sigma= B_{3+\frac{1+\sigma}{2}}\setminus B_{2-\frac{1+\sigma}{2}}$. Let $\varphi$ be a cut-off function supported in $A'_\sigma$ satisfying: (1) $\varphi\equiv 1$ in $A_\sigma$; (2) $\abs{\nabla^j \varphi}\leq \frac{c}{(1-\sigma)^j}$ for $j=1,2,3,4$ and some universal constant $c$; (3) $\varphi$ is a function of $\abs{x}$.

Computing directly, we have
\begin{eqnarray*}
\triangle^2(\varphi u)&=&\triangle(\varphi\triangle u+2\nabla\varphi\nabla u+u\triangle\varphi)\\
&=&\varphi\triangle^2 u+4\nabla\triangle u\nabla\varphi+4\nabla^2u\nabla^2\varphi+2\triangle u\triangle\varphi+4\nabla\triangle \varphi\nabla u+\triangle^2\varphi u\\
&=&  \varphi a_1 \nabla\triangle u + \varphi  a_2 \nabla^2  u + \varphi a_3 \nabla  u +\varphi  a_4 u+\varphi h \\
&& + \varphi \frac{1}{\abs{\partial B_r}}\int_{\partial B_r} b_1 \nabla\triangle u + b_2 \nabla^2 u + b_3 \nabla  u + b_4  u d\sigma \\
&& + 4\nabla\triangle u\nabla\varphi+4\nabla^2u\nabla^2\varphi+2\triangle u\triangle\varphi+4\nabla\triangle \varphi\nabla u+\triangle^2\varphi u.
\end{eqnarray*}
Next, we estimate the $L^p (p>1)$ norm of the right hand side of the above equation. By our choice of $\varphi$ and the assumption of $a_1$, we have
\begin{equation*}
	\norm{\nabla \triangle u \nabla \varphi}_{L^p(A'_\sigma)}  \leq \frac{C}{1-\sigma} \norm{\nabla^3 u}_{L^p(A'_\sigma)}
\end{equation*}
and
\begin{eqnarray*}
	 &&\norm{\varphi a_1 \nabla \triangle u}_{L^p(A'_\sigma)}\\
 &\leq&
 \norm{ a_1 \nabla \triangle(\varphi u)}_{L^p(A'_\sigma)}+\norm{ a_1 \nabla^2 u \nabla\varphi}_{L^p(A'_\sigma)}
 +\norm{ a_1 \nabla u \nabla^2\varphi}_{L^p(A'_\sigma)}+\norm{ a_1  u \nabla^3\varphi}_{L^p(A'_\sigma)}\\
 &\leq&
 \norm{a_1}_{L^4}\norm{\nabla^3(\varphi u)}_{L^{\frac{4p}{4-p}}(A'_\sigma)}+
\frac{C}{1-\sigma} \norm{a_1\nabla^2 u}_{L^p(A'_\sigma)}+\frac{C}{(1-\sigma)^2} \norm{a_1\nabla u}_{L^p(A'_\sigma)}\\
&&+\frac{C}{(1-\sigma)^3} \norm{a_1 u}_{L^p(A'_\sigma)}\\
 &\leq&
 \eta \norm{\varphi u}_{W^{4,p}(A'_\sigma)}+C\left(\frac{\norm{\nabla^2 u}_{L^{\frac{4p}{4-p}}(A'_\sigma)}}{1-\sigma}
 +\frac{\norm{\nabla u}_{L^{\frac{4p}{4-p}}(A'_\sigma)}}{(1-\sigma)^2}+\frac{\norm{ u}_{L^{\frac{4p}{4-p}}(A'_\sigma)}}{(1-\sigma)^3} \right)\\
  &\leq&
  \eta \norm{\varphi u}_{W^{4,p}(A'_\sigma)}+C \left( \frac{\norm{\nabla^3 u}_{L^p(A'_\sigma)}}{1-\sigma} +\frac{\norm{\nabla^2 u}_{L^p(A'_\sigma)}}{(1-\sigma)^2} +\frac{\norm{\nabla u}_{L^p(A'_\sigma)}}{(1-\sigma)^3 } +\frac{\norm{u}_{L^p(A'_\sigma)}}{(1-\sigma)^4}  \right),
\end{eqnarray*}
 the last interpolation is from Sobolev embedding (Theorem 5.8 in \cite{adams})
\begin{equation*}
\norm{\nabla^k u}_{L^{\frac{4p}{4-p}}(A'_\sigma)}\leq
 C( \norm{\nabla^{k+1} u}_{L^{p}(A'_\sigma)}+\norm{\nabla^{k} u}_{L^{p}(A'_\sigma)})
\end{equation*}
where the constant is independent of $\sigma$.

Moreover, Jensen's inequality implies that
\begin{eqnarray*}
	&& \int_{A'_\sigma} \frac{\varphi^p}{\abs{\partial B_r}^p} \left( \int_{\partial B_r} b_1 \nabla \triangle u \right)^p dx \\
	&\leq& \int_{A'_{\sigma}} \varphi^p \frac{1}{\abs{\partial B_r}}\left( \int_{\partial B_r} \abs{b_1 \nabla \triangle u}^p \right) dx \\
	&\leq & C \int_{A'_\sigma} \varphi^p\abs{b_1 \nabla^3 u}^p dx.
\end{eqnarray*}
Now, the same estimate used for $\norm{\varphi a_1 \nabla\triangle u}_{L^p(A'_\sigma)}$ can be used again to get the same upper bound.

Similar argument applies to the remaining terms and gives an estimate of $L^p$ norm of $\triangle^2(\varphi u)$, if we choose $\eta$ sufficiently small, by which the $L^p$ estimate of bi-Laplace operator implies
\begin{equation*}
	\norm{\varphi u}_{W^{4,p}(A'_\sigma)}\leq C \left( \frac{\norm{\nabla^3 u}_{L^p(A'_\sigma)}}{1-\sigma} +\frac{\norm{\nabla^2 u}_{L^p(A'_\sigma)}}{(1-\sigma)^2} +\frac{\norm{\nabla u}_{L^p(A'_\sigma)}}{(1-\sigma)^3 } +\frac{\norm{u}_{L^p(A'_\sigma)}}{(1-\sigma)^4}+\norm{h}_{L^p}  \right).
\end{equation*}
In particular, we have
\begin{eqnarray*}
	(1-\sigma)^4 \norm{\nabla^4 u}_{L^p(A_\sigma)} & \leq&  C \left( (1-\sigma)^3 {\norm{\nabla^3 u}_{L^p(A'_\sigma)}} + (1-\sigma)^2 {\norm{\nabla^2 u}_{L^p(A'_\sigma)}}\right.  \\
	&& \left. + (1-\sigma) {\norm{\nabla u}_{L^p(A'_\sigma)}} + {\norm{u}_{L^p(A'_\sigma)}}+\norm{h}_{L^p}  \right).
\end{eqnarray*}
By setting
\begin{equation*}
	\Psi_j=\sup_{0\leq \sigma\leq 1} (1-\sigma)^j \norm{\nabla^j u}_{L^p (A_\sigma)}
\end{equation*}
and noting that
\begin{equation*}
	A'_\sigma=A_{\frac{1+\sigma}{2}}\quad \mbox{and}\quad 1-\sigma= 2 (1-\frac{1+\sigma}{2}),
\end{equation*}
we obtain
\begin{equation}\label{eqn:seminorm}
	\Psi_4\leq C(\Psi_3 +\Psi_2 +\Psi_1 +\Psi_0+\norm{h}_{L^p}).
\end{equation}

We claim that for $j=1,2,3$, the following interpolation inequality holds for any $\epsilon>0$,
\begin{equation*}
	\Psi_j\leq \epsilon^{4-j} \Psi_4 +\frac{C}{\epsilon^j} \Psi_0.
\end{equation*}
In fact, by the definition of $\Psi_j$, for any $\gamma>0$, there is $\sigma_\gamma\in [0,1]$ such that
\begin{eqnarray*}
	\Psi_j &\leq& (1-\sigma_j)^j \norm{\nabla^j u}_{L^p(A_{\sigma_\gamma})}+ \gamma \\
	&\leq& \epsilon^{4-j}(1-\sigma_\gamma)^4 \norm{\nabla^4 u}_{L^p(A_{\sigma_\gamma})} +\frac{C}{\epsilon^j}\norm{u}_{L^p(A_{\sigma_\gamma})} +\gamma \\
	&\leq& \epsilon^{4-j} \Psi_4 + \frac{C}{\epsilon^j} \Psi_0 +\gamma .
\end{eqnarray*}
Here we used the interpolation inequality
\begin{eqnarray}
	\norm{\nabla^j u}_{L^p(A_{\sigma_\gamma})}\leq \eta^{4-j}\norm{\nabla^4 u}_{L^p(A_{\sigma_\gamma})}+\frac{C_3}{\eta^j} \norm{u}_{L^p(A_{\sigma_\gamma})}\label{4}
\end{eqnarray}
with $\eta= \epsilon (1-\sigma_\gamma)$. We remark that the constant in the above interpolation inequality are independent of $\sigma\in [0,1]$ (see the proof of Lemma 5.6 in \cite{adams}).

By sending $\gamma$ to $0$ and choosing small $\epsilon$, we obtain from (\ref{eqn:seminorm})
\begin{equation*}
	\Psi_4\leq C\left(\Psi_0+\norm{h}_{L^p}\right),
\end{equation*}
from which our lemma follows.
\end{proof}

For the universal constant $L>0$ given in Section 3 of \cite{ours}, set
\begin{equation*}
	A_i= B_{e^{-(i-1)L}}\setminus B_{e^{-iL}}
\end{equation*}
and
\begin{equation*}
	F_i(u) = \int_{A_i} \frac{1}{\abs{x}^4} u^2 dx.
\end{equation*}

\begin{rem}
	Here is a technical issue. We use $dx$ instead of $dv_g$ in the definition of $F_i(u)$. The advantage is that $F_i(u)$ is invariant under the scaling $x\to \lambda x$. Since $g$ is close to Euclidean metric, this difference does not matter when we use $F_i(u)$ as a control of $L^2$ norm.
\end{rem}

\begin{thm}
	\label{thm:tcapprox}
	There is some constant $\eta_0>0$ such that the following is true. Assume that $u:A_{i-1}\cup A_i \cup A_{i+1} \to \Real^N$ is an $\eta_0-$approximate biharmonic function in the sense of (\ref{eqn:approx}). Suppose
 \begin{equation}\label{cond}
\max_{i-1,i,i+1}\||x|^{4(1-1/p)}h\|^2_{L^p(A_i)}\leq\eta_0F_i(u)
\end{equation}
and
	\begin{equation}\label{eqn:notheta}
		\int_{\partial B_r} u d\theta=0
	\end{equation}
	for $r\in [e^{-l_1 L}, e^{-(l_2-1)L}]$.   Then

	(a) if $F_{i+1}(u)\leq e^{-L} F_i(u)$, then $F_i(u)\leq e^{-L} F_{i-1}(u)$;

	(b) if $F_{i-1}(u)\leq e^{-L} F_i(u)$, then $F_i(u)\leq e^{-L} F_{i+1}(u)$;

	(c) either $F_i(u)\leq e^{-L} F_{i-1}(u)$, or $F_i(u)\leq e^{-L} F_{i+1}(u)$.
\end{thm}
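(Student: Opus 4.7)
My plan is a blow-up/compactness argument that reduces (a), (b), (c) to the three-circle lemma for honest biharmonic functions already established in Section~3 of \cite{ours}. I describe the argument for (a); (b) is symmetric, and (c) is the analogous dichotomy.

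Suppose (a) fails. Then there exist sequences $\eta_k\to 0$, metrics $g_k$, coefficients $a_j^{(k)},b_j^{(k)}$, inhomogeneities $h_k$, and corresponding $\eta_k$-approximate biharmonic functions $u_k$ on $A_{i-1}\cup A_i\cup A_{i+1}$ satisfying (\ref{cond}) and (\ref{eqn:notheta}), with $F_{i+1}(u_k)\le e^{-L}F_i(u_k)$ but $F_i(u_k)>e^{-L}F_{i-1}(u_k)$. Using the scale invariance of $F_j$ (the very purpose of the weight $|x|^{-4}$), I rescale so the three annuli are independent of $k$, and then multiply $u_k$ by a constant to normalize $F_i(u_k)=1$. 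Under this normalization, $F_{i\pm 1}(u_k)\in[0,e^L]$, hypothesis (\ref{cond}) yields $\||x|^{4(1-1/p)}h_k\|_{L^p(A_j)}\to 0$, the coefficient norms still satisfy the bound with $\eta_k\to 0$, and $g_k\to\delta$ in $C^4$.

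Next, I apply Lemma~\ref{lem:lp} on each of the three annuli, covering it by finitely many rescaled copies of $B_4\setminus B_1$, to obtain a uniform $W^{4,p}$ bound for $u_k$ on $A_i$ and on every compactly contained subdomain of the triple annulus. Extracting a subsequence gives $u_k\rightharpoonup u_\infty$ weakly in $W^{4,p}_{\mathrm{loc}}$ and strongly in $W^{3,p}_{\mathrm{loc}}$. Each product $a_j^{(k)}\nabla^j u_k$ tends to $0$ in some $L^r$ with $r>1$ by H\"older, since $\|a_j^{(k)}\|_{L^{4/j}}\to 0$ while $\nabla^j u_k$ is uniformly bounded in the H\"older conjugate exponent via Sobolev embedding; the averaged $b$-terms are handled by the Jensen computation appearing in Lemma~\ref{lem:lp}; and $h_k\to 0$ in $L^p$ by (\ref{cond}). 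With $g_k\to\delta$ in $C^4$, passing to the limit in (\ref{eqn:approx}) yields $\triangle^2 u_\infty=0$ in the flat Euclidean metric, and the zero-mean condition (\ref{eqn:notheta}) survives by Fubini and the $L^2_{\mathrm{loc}}$ convergence.

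Strong $L^2(A_i)$ convergence forces $F_i(u_\infty)=1$, while lower semicontinuity produces $F_{i+1}(u_\infty)\le e^{-L}$ and $F_{i-1}(u_\infty)\le e^L$. Applying the three-circle lemma of \cite{ours} to the honest zero-mean biharmonic function $u_\infty$ then forces $F_i(u_\infty)\le e^{-L}F_{i-1}(u_\infty)$, which, combined with the strict form of the failed premise, is the desired contradiction. Parts (b) and (c) follow from the same contradiction scheme, (c) starting from the negation that both ratios $F_i/F_{i-1}$ and $F_i/F_{i+1}$ exceed $e^{-L}$. The main technical obstacle is the passage to the limit in the coefficient terms and in the $b$-averages: one must match each $a_j^{(k)}\in L^{4/j}$ against the correct Sobolev exponent of $\nabla^j u_k$ exactly as in the proof of Lemma~\ref{lem:lp}, and uniform estimates precisely on the middle annulus $A_i$ (not merely on compact interior subsets) are needed to secure $F_i(u_\infty)=1$. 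A secondary delicate point is that the three-circle lemma from \cite{ours} should be invoked with a small safety margin so that the strict inequality $F_i(u_k)>e^{-L}F_{i-1}(u_k)$ yields a genuine contradiction rather than mere equality in the limit.
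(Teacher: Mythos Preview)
Your approach is essentially the same compactness-and-contradiction argument as the paper's: rescale to fix the annuli, normalize $F_i(u_k)=1$, use Lemma~\ref{lem:lp} to get uniform bounds, pass to a limit that is genuinely biharmonic for the flat metric, and contradict the three-circle lemma of \cite{ours}.

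The one place where you are slightly less clean than the paper is the endgame. You apply the three-circle lemma in the implicational form ``$F_{i+1}\le e^{-L}F_i\Rightarrow F_i\le e^{-L}F_{i-1}$'' and then hope the \emph{strict} inequality from the failed premise survives the limit; as you yourself flag, it does not --- weak lower semicontinuity on $A_{i-1}$ only gives $F_{i-1}(u_\infty)\le e^{L}$, which meets $F_{i-1}(u_\infty)\ge e^{L}$ at equality. The paper avoids this by invoking Theorem~3.1 of \cite{ours} in its strict convexity form
\[
2F_i(u_\infty)<e^{-L}\bigl(F_{i-1}(u_\infty)+F_{i+1}(u_\infty)\bigr),
\]
valid for any nonzero biharmonic function with zero spherical mean. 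This single strict inequality handles all three cases (a), (b), (c) at once: in each case the negated hypothesis gives $2F_i(u_k)\ge e^{-L}(F_{i-1}(u_k)+F_{i+1}(u_k))$, and strong convergence on $A_i$ together with weak convergence on $A_{i\pm1}$ passes this non-strict inequality to the limit, directly contradicting the strict one. This is exactly the ``safety margin'' you asked for, and it is already built into Theorem~3.1 of \cite{ours}.
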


\begin{proof}
	(The proof is almost the same as Theorem 3.4 in \cite{ours}. For reader's convenience, we repeat it below.)

	The exact value of $i$ does not matter, because $F_i$ is invariant under scaling. Hence, we consider only the case of $i=2$. Assume the theorem is not true. We have a sequence of $\eta_k\to 0$ and a sequence of $u_k$ defined on $A_1\cup A_2\cup A_3$ (and a sequence of $g_k$ defined on $A_1\cup A_2\cup A_3$ as required in (a) of Definition \ref{defn:approx}) satisfying
\begin{eqnarray}\label{eqn:kk}
	&&\triangle_{g_k}^2u_k (r,\theta)= a_{k1}\nabla_{g_k}\triangle_{g_k} u_k+a_{k2}\nabla_{g_k}^2 u_k+a_{k3}\nabla_{g_k} u_k+a_{k4} u_k \\ \nonumber
&& +
\frac{1}{\abs{\partial B_r}}\int_{\partial B_r}{b_{k1}\nabla_{g_k}\triangle_{g_k} u_k+b_{k2}\nabla_{g_k}^2 u_k+b_{k3}\nabla_{g_k} u_k+b_{k4} u_k}+h_k(x)
\end{eqnarray}
with
 \begin{equation}\label{5}
\max_{1,2,3}\||x|^{4(1-1/p)}h_k\|^2_{L^p(A_i)}\leq\eta_kF_2(u_k)
\end{equation}
and
\begin{equation}\label{eqn:kksmall}
	\sum_{i=1}^4 \norm{a_{ki}}_{L^{4/i}(B_{2\rho}\setminus B_{\rho})} +\norm{b_{ki}}_{L^{4/i}(B_{2\rho}\setminus B_{\rho})} \leq \eta_k,
\end{equation}
for any $B_{2\rho}\setminus B_{\rho}\subset A_1\cup A_2\cup A_3$.

By taking subsequence, we assume that one of (a), (b) and (c) is not true for $u_k$. If (a) is not true, then we have
\begin{equation*}
	F_2(u_k)\geq e^{L} F_3(u_k) \quad\mbox{and}\quad F_2(u_k)> e^{-L}F_1(u_k).
\end{equation*}
If (b) is not true, then
\begin{equation*}
	F_2(u_k)\geq e^{L} F_1(u_k) \quad\mbox{and}\quad F_2(u_k)> e^{-L}F_3(u_k).
\end{equation*}
If (c) is not true, then
\begin{equation*}
	F_2(u_k)> e^{-L} \max \{F_1(u_k), F_3(u_k)\}.
\end{equation*}
In any case, we control $F_1(u_k)$ and $F_3(u_k)$ by $F_2(u_k)$. Multiplying by a constant to $u_k$ if necessary, we assume that $F_2(u_k)=1$ for all $k$. The above discussion shows that
\begin{equation*}
	\norm{u_k}_{L^2(A_1\cup A_2\cup A_3)}\leq C.
\end{equation*}
Lemma \ref{lem:lp} shows that (by passing to a subsequence) we have
\begin{eqnarray*}
u_k\rightharpoonup u\quad && weakly \quad in \quad L^2(A_{1}\cup A_2\cup A_3),\\
u_k\rightarrow u \quad
&& strongly \quad in \quad L^2(A_2).
\end{eqnarray*}
By (\ref{eqn:kk}), (\ref{5}) and (\ref{eqn:kksmall}), we know that $u$ is a nonzero biharmonic function with respect to the flat metric defined on $A_1\cup A_2\cup A_3$ satisfying (\ref{eqn:notheta}), because $g_k$ converges strongly in $C^3$ norm to the flat metric. The three circle lemma for biharmonic function (Theorem 3.1 in \cite{ours}) implies that
\begin{equation}\label{eqn:good}
	2F_2(u)< e^{-L} (F_1(u)+F_3(u)).
\end{equation}
If (c) does not hold for $u_k$, we have
\begin{equation*}
	2F_2(u_k)\geq e^{-L}(F_1(u_k)+F_3(u_k)).
\end{equation*}
By the strong convergence of $u_k$ in $L^2(A_2)$ and weak convergence in $L^2(A_1\cup A_2\cup A_3)$, we have
\begin{equation*}
	2F_2(u)\geq e^{-L}(F_1(u)+F_3(u)),
\end{equation*}
which is a contradiction to (\ref{eqn:good}). Similar argument works for other cases.
\end{proof}

\subsection{estimate the tangential energy}

Let $u_i$ be the sequence in Theorem \ref{thm:noneck}. Assume that
\begin{equation*}
	\Sigma=B_\delta\setminus B_{\lambda_i R}= \bigcup_{l=l_0}^{l_i} A_l
\end{equation*}
and for any $\varepsilon>0$, by choosing $\delta$ small and $R$ large, we may also assume (by an inductiona argument of Ding and Tian \cite{DT})
\begin{equation}\label{eqn:DT}
	\int_{A_l} \abs{\nabla^2 u_i}^2 + \abs{\nabla u_i}^4 dx < \varepsilon^4 <\varepsilon_0
\end{equation}
for $l=l_0,\cdots,l_i$. Set $\widetilde{u_i}(x)=u_i(e^{-lL}x)$, by $\varepsilon_0$-regularity Theorem \ref{thm:regularity}, we have
 \begin{equation*}
 \|\widetilde{u_i}-\overline{\widetilde{u_i}}\|_{W^{4,p}(A_0)}\leq C(\|\nabla^2\widetilde{u_i}\|_{L^2(A_{-1}\cup A_0\cup A_1)}+\|\nabla \widetilde{u_i}\|_{L^4(A_{-1}\cup A_0\cup A_1)}+\|\tau(\widetilde{u_i})\|_{L^p(A_{-1}\cup A_0\cup A_1)}),
\end{equation*}
where $\overline{\widetilde{u_i}}$ is the mean value of $\widetilde{u_i}$ over $A_0$.

Scaling back, if $\delta$ is sufficiently small, we will get
\begin{eqnarray*}
&&\sum_{k=0}^4\||x|^{k-4/p}\nabla^k(u_i-\overline{\widetilde{u_i}})\|_{L^p(A_l)}\\
&\leq& C(\|\nabla^2u_i\|_{L^2(A_{l-1}\cup A_l\cup A_{l+1})}+\|\nabla u_i\|_{L^4(A_{l-1}\cup A_l\cup A_{l+1})}+e^{-lL4(1-1/p)}\|\tau(u_i)\|_{L^p(A_{l-1}\cup A_l\cup A_{l+1})})\\
&\leq& C\varepsilon.
\end{eqnarray*}
Let $r=e^t$, then as a function of $(t,\theta)$, we have
\begin{eqnarray}\label{3}
\|u_i-\overline{\widetilde{u_i}}\|_{W^{4,p}(-lL,-(l-1)L)\times S^3}\leq C\varepsilon,
\end{eqnarray}
for any $l_0\leq l\leq l_i$.

 The theorem is equivalent to the statement that for any $\varepsilon>0$, we can find $\delta$ small and $R$ large such that
\begin{equation*}
	\mbox{osc}_{B_\delta\setminus B_{\lambda_i R}}u_i <C\varepsilon
\end{equation*}
for $i$ sufficiently large.

Set
\begin{equation*}
	u_i^*(r)=\frac{1}{\abs{\partial B_r}} \int_{\partial B_r} u_i(r,\theta) d\sigma.
\end{equation*}
The Poincar\'e inequality and (\ref{eqn:DT}) imply
\begin{equation*}
	\int_{A_l} \frac{1}{\abs{x}^4} \abs{u_i -u_i^*}^2 dx \leq C \varepsilon^2.
\end{equation*}

\begin{lem}
	There exists some $\varepsilon_1>0$ such that if $\varepsilon<\varepsilon_1$ in (\ref{eqn:DT}) and $\delta< \varepsilon_1$, $w_i=u_i-u_i^*$ is an $\eta_0-$approximate biharmonic function defined on $B_\delta\setminus B_{\lambda_i R}$ in the sense of (\ref{eqn:approx}), where $\eta_0$ is the constant in Theorem \ref{thm:tcapprox}.
\end{lem}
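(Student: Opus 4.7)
My plan is to derive an equation for $w_i = u_i - u_i^*$ directly from the biharmonic map equation (\ref{eqn:ELex}) and match it term-by-term to (\ref{eqn:approx}). Using the chain rule on $B(u_i)$ and $P(u_i)$ and collecting by the highest-order derivative of $u_i$ present, the right-hand side of (\ref{eqn:ELex}) can be written as
\begin{equation*}
\triangle_g^2 u_i = a_1\,\nabla_g\triangle_g u_i + a_2\,\nabla_g^2 u_i + a_3\,\nabla_g u_i + a_4\, u_i + \tau(u_i),
\end{equation*}
where each $a_k$ is a polynomial in $u_i$, $\nabla_g u_i$, $\nabla_g^2 u_i$ of total differential order at most $4-k$, with bounded factors coming from $B^{(j)}(u_i)$ and $P^{(j)}(u_i)$. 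Estimate (\ref{3}) together with H\"older's inequality gives $\|a_k\|_{L^{4/k}(A_l)} \leq C\varepsilon$ on each annulus $A_l$, and since this norm is dyadically scale-invariant, the bound extends to every $B_{2\rho}\setminus B_\rho \subset B_\delta\setminus B_{\lambda_i R}$. The metric-closeness condition (a) of Definition~\ref{defn:approx} follows from $g_{ij}=\delta_{ij}+O(|x|^2)$ in normal coordinates once $\delta$ is small.

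The second key ingredient is that in normal coordinates on $S^4$ the round metric takes the rotationally symmetric form $g = dr^2 + \sin^2(r)\,g_{S^3}$. A direct calculation shows that $\triangle_g$ commutes with the spherical averaging operator $u \mapsto u^*(r) = \frac{1}{|\partial B_r|}\int_{\partial B_r} u\,d\sigma$, and hence so does $\triangle_g^2$. Therefore $\triangle_g^2 u_i^* = (\triangle_g^2 u_i)^*$, and subtracting gives
\begin{equation*}
\triangle_g^2 w_i = \bigl(a_1\nabla_g\triangle_g u_i - \overline{a_1\nabla_g\triangle_g u_i}\bigr) + \cdots + \bigl(a_4 u_i - \overline{a_4 u_i}\bigr) + \bigl(\tau(u_i) - \overline{\tau(u_i)}\bigr),
\end{equation*}
where $\overline{\cdot}$ denotes the spherical average. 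Writing $u_i = w_i + u_i^*$ everywhere, the $w_i$-parts match (\ref{eqn:approx}) exactly with the same $a_k$ pointwise and $b_k = -a_k$ inside the integral. The remaining terms, namely those involving derivatives of $u_i^*$ together with $\tau(u_i) - \overline{\tau(u_i)}$, are declared to be $h$.

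What remains is to verify $\||x|^{4(1-1/p)}h\|_{L^p(B_\delta\setminus B_{\lambda_i R})} \leq \eta_0$. The $\tau$ contribution is immediate: $\||x|^{4(1-1/p)}(\tau(u_i) - \overline{\tau(u_i)})\|_{L^p} \leq C\delta^{4(1-1/p)}\|\tau(u_i)\|_{L^p(B_1)} \leq C\delta^{4(1-1/p)}\Lambda$, which is small for small $\delta$ since $p > 1$. The main obstacle is bounding the residual involving derivatives of $u_i^*$: because $a_k$ depends on the angular direction while $\nabla_g^j u_i^*$ is purely radial, these terms fit neither the pointwise nor the averaged template of (\ref{eqn:approx}) and must sit in $h$. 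The natural approach is to observe that $|\nabla_g^j u_i^*|(r)$ is pointwise dominated by a spherical average of $|\nabla_g^j u_i|$, pair with the $L^{4/k}$ bound on $a_k$ via H\"older, and sum dyadically using the uniform energy bound (\ref{eqn:DT}). A minor related point is that lower-order discrepancies from $g_{ij}-\delta_{ij}=O(|x|^2)$ when expanding $\triangle_g$ produce further error terms that also get absorbed into $h$, but they are of size $O(\delta^2)$ and hence small.
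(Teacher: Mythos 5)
Your setup correctly identifies the commutation of $\triangle_g$ with spherical averaging (a key structural fact on round $S^4$) and the verification of conditions (a) and (b)-for-$\tau$, but the core decomposition has a gap that makes the argument fail.

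The problem is where you send the derivatives of $u_i^*$. After subtracting the averaged equation you write $u_i = w_i + u_i^*$, keep the $w_i$-parts as the pointwise and averaged terms of \eqref{eqn:approx}, and declare everything involving $\nabla_g^j u_i^*$ to be part of $h$. But condition (b) of Definition~\ref{defn:approx} is a \emph{global} weighted $L^p$ bound over the entire neck $B_\delta\setminus B_{\lambda_i R}$, whereas condition (c) on the coefficients $a_j,b_j$ is a \emph{scale-invariant, per-dyadic-annulus} bound. Terms like $a_k\,\nabla_g^{4-k}u_i^* - \overline{a_k\,\nabla_g^{4-k}u_i^*}$ behave like coefficient-type terms: by $\varepsilon_0$-regularity and scaling, their contribution to $\||x|^{4(1-1/p)}(\cdot)\|_{L^p(A_l)}$ is of size $C\varepsilon^2$ on each $A_l$, uniformly in $l$ (the weight $|x|^{4(1-1/p)}$ exactly cancels the scaling), and there are $l_i - l_0 \to \infty$ annuli as $i\to\infty$. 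Summing gives roughly $(l_i-l_0)^{1/p}\cdot C\varepsilon^2$, which is unbounded. So these terms cannot be absorbed into $h$; the ``H\"older plus dyadic sum'' step you propose does not produce the required $\leq\eta_0$ bound. By contrast, $\tau(u_i)-\overline{\tau(u_i)}$ does satisfy the global bound because $\|\tau(u_i)\|_{L^p(B_1)}$ is controlled independently of the neck length, so the $\delta^{4(1-1/p)}$ factor from the weight actually dominates.

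The paper avoids the issue by never generating a residual of the form $a_k\nabla_g^j u_i^*$. It performs a double telescoping: first, when averaging each nonlinear term $\alpha_k(u)\,\nabla_g^{a}u\#\cdots\#\nabla_g^{b}u$ of \eqref{eqn:euler}, it telescopes $\alpha_k(u)-\alpha_k(u^*)$, $\nabla_g^a u - \nabla_g^a u^*$, etc., producing an averaged term that is \emph{linear in $w$ and its derivatives} plus a pure-$u^*$ term $\alpha_k(u^*)\nabla_g^a u^*\#\cdots$; second, when subtracting the resulting $u^*$-equation from \eqref{eqn:euler}, the pure-$u^*$ terms pair with $\alpha_k(u)\nabla_g^a u\#\cdots$ and are telescoped again into terms linear in $w$. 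Thus in the final equation for $w$, \emph{all} the $u^*$-dependence is packaged into coefficients $\beta_j',\beta_j$, estimated in the scale-invariant norms $L^{4/j}(B_{2\rho}\setminus B_\rho)$ of condition (c), and $h$ consists solely of $\tau(u)-\overline{\tau(u)}$. That structural discipline is the essential ingredient your proposal is missing.
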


\begin{rem}
	Although the proof is parallel to Lemma 4.1 in \cite{ours}. We reproduce it because (1) we now uses the sphere metric instead of the flat one; (2) the definition of $\eta-$approximate biharmonic function is different.
\end{rem}

\begin{proof}
	For simplicity, we omit the subscript $i$.
Recall that u satisfies
\begin{eqnarray}\label{eqn:euler}
	&& \triangle_g^2 u = \alpha_1(u) \nabla_g \triangle_g u\# \nabla_g u+ \alpha_2(u) \nabla_g^2  u\# \nabla_g^2 u \\ \nonumber
&&	+ \alpha_3(u) \nabla_g^2 u \# \nabla_g u \# \nabla_g u + \alpha_4(u)\nabla_g u \# \nabla_g u \#\nabla_g u \# \nabla_g u + \tau(u).
\end{eqnarray}
Here $\alpha_i(u)$ is a smooth function of $u$ and $\#$ is the contraction of tensors with respect to $g$, for which we have for example,
\begin{equation*}
	\abs{\nabla_g \triangle_g u\# \nabla_g u}\leq C \abs{\nabla_g \triangle_g u} \abs{\nabla_g u}.
\end{equation*}
Since $\triangle_g = \frac{\partial^2}{\partial r^2} + \frac{3\cos r}{\sin r}\pfrac{}{r} +\frac{1}{\sin^2 r}\triangle_{S^3}$ and $\int_{S^3} \triangle_{S^3} f d\theta=0$ for any $f$, we have
\begin{eqnarray*}
	\triangle_g^2 u^*(r)&=& \frac{1}{\abs{\partial B_r}} \int_{\partial B_r} \triangle_g^2 u d\sigma \\
	&=& \frac{1}{\abs{\partial B_r}}\int_{\partial B_r} \alpha_1(u) \nabla_g \triangle_g u\# \nabla_g u+ \alpha_2(u) \nabla_g^2  u\# \nabla_g^2 u \\
&&	+ \alpha_3(u) \nabla_g^2 u \# \nabla_g u \# \nabla_g u + \alpha_4(u)\nabla_g u \# \nabla_g u \#\nabla_g u \# \nabla_g u d\sigma \\
&& +\frac{1}{\abs{\partial B_r}}\int_{\partial B_r} \tau(u) d\sigma \\
&=& I + II + III + IV +\frac{1}{\abs{\partial B_r}}\int_{\partial B_r} \tau(u) d\sigma .
\end{eqnarray*}

\begin{rem}
	Here we make essential use of the symmetry of spherical metric to simplify the computation in the first line above. This is partially the reason that we work on round $S^4$.
\end{rem}

Computing directly, we get
\begin{eqnarray*}
	I &=& \frac{1}{\abs{\partial B_r}} \int_{B_r} \alpha_1(u) \nabla_g \triangle_g u \# \nabla_g u -\alpha_1(u^*) \nabla_g \triangle_g u \# \nabla_g u \\
	&&  + \alpha_1(u^*) \nabla_g \triangle_g u \# \nabla_g u -\alpha_1(u^*) \nabla_g \triangle_g u^* \# \nabla_g u \\
	&& + \alpha_1(u^*) \nabla_g \triangle_g u^* \#\nabla_g u -\alpha_1(u^*) \nabla_g \triangle_g u^* \# \nabla_g u^* d\sigma \\
	&& + \alpha_1(u^*) \nabla_g \triangle_g u^* \# \nabla_g u^* \\
	&=& \frac{1}{\abs{\partial B_r}} \int_{\partial B_r} \beta_4[u] (u-u^*) + \beta_1[u] \nabla_g \triangle_g (u-u^*)\\
	&& + \beta_3[u] \nabla_g (u-u^*) d\sigma + \alpha_1(u^*) \nabla_g \triangle_g u^* \# \nabla_g u^*.
\end{eqnarray*}
Here $\beta_i[u]$ is some expression depending on $u$, $u^*$ and their derivatives. Those $\beta_i$'s may differ from line to line in the following. However, thanks to Theorem \ref{thm:regularity}, we have
\begin{equation*}
	\norm{\beta_i}_{L^{4/i}(B_{2\rho}\setminus B_{\rho})}\leq \eta_0 \quad \mbox{for}\quad \rho\in [\lambda_i R,\delta/2],
\end{equation*}
if $\varepsilon$ in (\ref{eqn:DT}) is smaller than some $\varepsilon_1$. We shall require the above holds for all $\beta_i$ and $\beta'_i$ below by asking $\varepsilon_1$ to be smaller and smaller.

The same computation gives
\begin{eqnarray*}
	II&=& \frac{1}{\abs{\partial B_r}} \int_{B_r} \beta_4[u] (u-u^*) + \beta_2[u] \nabla_g^2 (u-u^*) d\sigma + \alpha_2(u^*) \nabla_g^2 u^* \# \nabla_g^2 u^*,
\end{eqnarray*}
\begin{eqnarray*}
	III&=&  \frac{1}{\abs{\partial B_r}} \int_{\partial B_r}\beta_4[u] (u-u^*) + \beta_2[u] \nabla_g^2 (u-u^*) + \beta_3[u] \nabla_g (u-u^*) d\sigma \\
	&& + \alpha_3(u^*) \nabla_g^2 u^* \# \nabla_g u^* \# \nabla_g u^*
\end{eqnarray*}
and
\begin{eqnarray*}
	IV &=& \frac{1}{\abs{\partial B_r}} \int_{\partial B_r} \beta_4[u] (u-u^*) + \beta_3[u] \nabla_g (u-u^*) d\sigma + \alpha_4(u^*) \nabla_g u^* \#\nabla_g u^* \#\nabla_g u^* \#\nabla_g u^*.
\end{eqnarray*}
In summary, $u^*$ satisfies an equation similar to (\ref{eqn:euler}) except an error term of the form
\begin{equation*}
	\frac{1}{\abs{\partial B_r}} \int_{\partial B_r} \beta_1[u] \nabla_g \triangle_g w + \beta_2[u] \nabla_g^2 w + \beta_3[u] \nabla_g w + \beta_4[u] w d\sigma.
\end{equation*}
Subtract the equation of $u^*$ with (\ref{eqn:euler}) and handle the terms like $\alpha_1(u)\nabla_g \triangle_g u \# \nabla_g u- \alpha_1(u^*) \nabla_g \triangle_g u^* \# \nabla_g u^*$ as before to get
\begin{eqnarray*}
	\triangle_g^2 w &=& \beta'_1[u] \nabla_g \triangle_g w + \beta'_2 [u] \nabla_g^2  w
		+ \beta'_3[u] \nabla_g w  + \beta'_4[u] w \\ \nonumber
		&& +
	\frac{1}{\abs{\partial B_r}} \int_{\partial B_r} \beta_1[u] \nabla_g \triangle_g w + \beta_2[u] \nabla_g^2 w + \beta_3[u] \nabla_g w + \beta_4[u] w d\sigma +h,
\end{eqnarray*}
where
\begin{equation*}
	h= \tau(u)-\frac{1}{\abs{\partial B_r}}\int_{\partial B_r} \tau(u) d\sigma.
\end{equation*}
To see that $h$ satisfies (b) of Definition \ref{defn:approx}, we notice that $4(1-\frac{1}{p})>0$ and
	\begin{equation*}
		\norm{ \abs{x}^{4(1-\frac{1}{p})} h}_{L^p(B_\delta\setminus B_{\lambda_i R})} \leq \delta^{4(1-\frac{1}{p})} \norm{h}_{L^p(B_\delta\setminus B_{\lambda_i R})}\leq C \delta^{4(1-\frac{1}{p})} \norm{\tau(u_i)}_{L^p(B_1)}.
	\end{equation*}
Since $\tau(u_i)$ is uniformly bounded in $L^p$, the lemma follows by choosing $\delta$ small.
\end{proof}

Now we apply Theorem \ref{thm:tcapprox} to the function $w_i$.
\begin{lem}
	For any $0<\varepsilon<\varepsilon_1$ and sufficiently small $\delta>0$, we have
\begin{equation}\label{1}
	F_l(w_i)\leq  C\varepsilon^2\left(e^{-\min\set{8(1-1/p),1}(l-l_0)L}+e^{-\min\set{8(1-1/p),1}(l_i-l)L}\right),
\end{equation}
for $l_0<l<l_i$.
\end{lem}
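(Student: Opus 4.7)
The plan is to apply the three-circle theorem \ref{thm:tcapprox} iteratively to $w_i$, feeding in boundary control at $l_0$ and $l_i$ coming from (\ref{3}) together with an explicit size estimate on the inhomogeneous term. First I would collect the boundary data: applying (\ref{3}) on the outermost annulus $A_{l_0}$ and the innermost annulus $A_{l_i}$, and using the Poincar\'e inequality on each spherical shell $\partial B_r$ to pass from $u_i-\overline{\widetilde{u_i}}$ to $w_i=u_i-u_i^*$, one obtains $F_{l_0}(w_i),F_{l_i}(w_i)\leq C\varepsilon^2$. Setting $\sigma_l:=\||x|^{4(1-1/p)}h\|_{L^p(A_l)}^2$, the uniform $L^p$ bound on $\tau(u_i)$ and the scaling of the weight give $\sigma_l\leq Ce^{-8(1-1/p)lL}$; since $e^{-l_0L}\sim\delta$, choosing $\delta$ small yields $\sigma_l\leq C\varepsilon^2 e^{-8(1-1/p)(l-l_0)L}$.

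Next I iterate the three-circle estimate. At each interior $l$, a dichotomy: either $\max_{|k-l|\leq 1}\sigma_k\leq \eta_0 F_l(w_i)$, in which case hypothesis (\ref{cond}) holds and Theorem \ref{thm:tcapprox}(c) gives the trichotomy with rate $e^{-L}$; or $F_l(w_i)\leq \eta_0^{-1}\max_{|k-l|\leq 1}\sigma_k$ is already dominated by the source-term bound. To carry out the two-sided propagation I would introduce a modified quantity
\begin{equation*}
\Phi_l := F_l(w_i) + K\sum_{|k-l|\leq 1}\sigma_k
\end{equation*}
for a large constant $K$, verify that $\Phi$ satisfies a three-circle inequality at rate $e^{-\mu L}$ with $\mu:=\min\{1,\,8(1-1/p)\}$, and then propagate the boundary data $\Phi_{l_0},\Phi_{l_i}\leq C\varepsilon^2$ inward by the standard three-circle iteration (which, starting from each endpoint, rules out conclusion (a) or (b) via the boundary bound and leaves only geometric decay). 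Since $F_l(w_i)\leq \Phi_l$, this yields the claimed two-sided exponential decay (\ref{1}).

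The main technical obstacle is the verification of the modified three-circle inequality for $\Phi$: one must check that adjoining the source bookkeeping terms does not break conclusions (a)-(c) of Theorem \ref{thm:tcapprox}. For $K$ chosen large enough relative to $\eta_0^{-1}$ and the constants in (\ref{cond}), the ``bad'' branch of the above dichotomy is absorbed into the $K\sigma_k$ summand, the ``good'' branch supplies the homogeneous rate $e^{-L}$, and the source sequence $\sigma_l$ itself satisfies the geometric inequality $\sigma_{l+1}\leq e^{-8(1-1/p)L}\sigma_l$. Taking the worse of the two rates produces the exponent $\mu L$ and closes the iteration; this is routine but careful bookkeeping, and it is the only nontrivial step once the boundary estimate and the source bound are in place.
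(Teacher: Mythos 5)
Your high-level strategy is the right one and matches the paper's: isolate a dichotomy at each annulus (either the source term dominates $F_l(w_i)$, in which case one reads off the exponential bound directly from the weighted bound on $h_i$, or hypothesis (\ref{cond}) holds so the three-circle theorem \ref{thm:tcapprox} applies), feed in the uniform bound $F_l(w_i)\leq C\varepsilon^2$ (from the Poincar\'e estimate just above the lemma) at the endpoints $l_0,l_i$, and propagate. Where you differ is the \emph{packaging}: you introduce a modified quantity $\Phi_l=F_l+K\sum_{|k-l|\leq1}\sigma_k$ and aim to prove a single three-circle inequality for $\Phi$, whereas the paper instead lists the indices $\{j_1,\dots,j_{n_i}\}$ where (\ref{cond}) fails, bounds $F_{j_k}$ directly from the source estimate, and applies the iteration separately on each segment of ``good'' annuli with the $j_k$'s and the endpoints $l_0,l_i$ as anchors. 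The paper's segmentation argument is essentially the same induction you have in mind, but it avoids having to re-derive conclusions (a)--(c) for $\Phi$.

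There are two gaps in your write-up as stated. First, $\sigma_l$ itself does \emph{not} satisfy $\sigma_{l+1}\leq e^{-8(1-1/p)L}\sigma_l$; the weighted source estimate only gives the one-sided envelope $\sigma_l\leq Ce^{-8(1-1/p)lL}$, and nothing forces $\|h_i\|_{L^p(A_l)}$ itself to be monotone. You should replace $\sigma_l$ in the definition of $\Phi_l$ by the geometric envelope $\tilde\sigma_l:=Ce^{-8(1-1/p)lL}$; then the comparison sequence really is geometric and $F_l\leq\Phi_l$ still holds. Second, verifying a three-circle inequality for $\Phi$ is not as routine as you claim: conclusion (c) transfers cleanly, but (a) and (b) for $\Phi$ at rate $e^{-\mu L}$ require, in the case where (\ref{cond}) holds and $F$ falls into the ``wrong'' branch of (c), an extra argument showing that $F_l$ is then dominated by $K\tilde\sigma_l'$, so that the modified inequality can be recovered from the source envelope alone. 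Moreover, that absorption needs a genuine gap $8(1-1/p)-\mu>0$ (to choose $K$), which is fine under the paper's standing assumption $p\geq 4/3$ (so $8(1-1/p)\geq 2>1=\mu$) but would fail at the borderline $8(1-1/p)=\mu$; the paper's direct segmentation does not have this restriction. These are fixable, but the ``technical obstacle'' you flag is more substantial than a bookkeeping exercise.
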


\begin{proof}
	Let the set of $l(l_0<l<l_i)$, for which the condition (\ref{cond}) is not true, be denoted by $\set{j_1,\cdots,j_{n_i}}$ and we assume that
	\begin{equation*}
		l_0< j_1 < j_2 < \cdots < j_{n_i} < l_i.
	\end{equation*}
	By definition, for each $l=j_k$,
\begin{eqnarray}
\max_{l-1,l,l+1}\||x|^{4(1-1/p)}h_i\|^2_{L^p(A_l)}\geq\eta_0F_l(w_i).
\end{eqnarray}
Then we have
\begin{eqnarray*}
	F_l(w_i)&\leq& C \max_{l-1,l,l+1} \norm{ \abs{x}^{4(1-\frac{1}{p})}h_i}_{L^p(A_l)}\\
	&\leq&Ce^{-8(1-1/p)lL}\\
&\leq& C\delta^{8(1-1/p)} e^{-8(1-1/p)(l-l_0)L} \\
&\leq& C\varepsilon^2 e^{-8(1-1/p) (l-l_0)L},
\end{eqnarray*}
if we choose $\delta$ small.

By the choice of ${j_k}$, the condition (\ref{cond}) holds for ${j_k}<l<{j_{k+1}}$, $k=1,...,i-1$. 
By an application of Theorem \ref{thm:tcapprox} (see also Lemma 4.2 in \cite{ours}), we have, for ${j_k}<l<{j_{k+1}}$
\begin{eqnarray*}
F_l(w_i)&\leq& C\left( e^{-L (l-{j_k})}F_{{j_k}}(w_i)+ e^{-L({j_{k+1}}-l)}F_{{j_{k+1}}}(w_i) \right)\\
&\leq& C \varepsilon^2 \left( e^{-\min\{8(1-1/p),1\}(l-l_0)L}\right).
\end{eqnarray*}

So, if ${j_1}=l_0+1$ and ${j_{n_i}}=l_i-1$, the inequality (\ref{1}) follows immediately. If not, assuming ${j_1}>l_0+1$, by Theorem \ref{thm:tcapprox} again, we have, for $l_0<l<j_1$,
\begin{eqnarray*}
F_l(w_i)&\leq& C\left( e^{-L (l-l_0)}F_{l_0}(w_i)+ e^{-L({j_1}-l)}F_{{j_1}}(w_i) \right)\\
&\leq&C\left( e^{-L (l-l_0)}F_{l_0}(w_i)+ \varepsilon^2 e^{-\min\{8(1-1/p),1\}(l-l_0)L}\right)\\
&\leq&C\varepsilon^2e^{-\min\{8(1-1/p),1\}(l-l_0)L}.
\end{eqnarray*}

Similarly, if $j_{n_i}<l_i-1$, we have, for $j_{n_i}<l<l_i-1$,
\begin{eqnarray*}
	F_l(w_i)&\leq& C\left( e^{-L (l-{j_{n_i}})}F_{{j_{n_i}}}(w_i)+ e^{-L(l_i-l)}F_{l_{i}}(w_i) \right)\\
&\leq&C\left( \varepsilon^2 e^{-\min\{8(1-1/p),1\}(l-l_0)L}+e^{-L(l_i-l)}F_{l_{i}}(w_i)\right)\\
&\leq&C\varepsilon^2\left(e^{-\min\{8(1-1/p),1\}(l-l_0)L}+e^{-L(l_i-l)}\right)\\
&\leq&C\varepsilon^2\left(e^{-\min\{8(1-1/p),1\}(l-l_0)L}+e^{-\min\{8(1-1/p),1\}(l_i-l)L}\right).
\end{eqnarray*}
\end{proof}

Since $w_i$ satisfies (\ref{eqn:approx}), we may use Lemma \ref{lem:lp} to get estimates for the derivatives of $w_i$ and the tangential derivatives of $u_i$. In the following, $(r,\theta)$ is the polar coordinates where $\theta\in S^3$ is a point of the unit sphere. A function $u(r,\theta)$ is also considered a function of $(\tilde{t},\theta)$, where $r=e^{\tilde{t}}$. We denote the gradient operator on $S^3$ by $\nabla_{S^3}$ and the Laplacian on $S^3$ by $\triangle_{S^3}$.

\begin{rem}
	Since we have only $L^p$ norm of bi-tension fields bounded, we may not prove pointwise decay bound for tangential derivatives. Hence we need the following lemma as a replacement.
\end{rem}

\begin{lem}\label{lem:point}
\begin{eqnarray}\label{eqn:tangentdecay}
	&&\int_{(-lL,-(l-1)L)\times S^3}\left( \abs{\triangle_{S^3} u_i}^2+\abs{\partial_{\tilde{t}}\nabla_{S^3} u_i}^2\right)d\tilde{t}d\theta\\ \nonumber
	&\leq&  C\varepsilon^2\left(e^{-\min\{8(1-1/p),1\}(l-l_0)L}+e^{-\min\{8(1-1/p),1\}(l_i-l)L}\right).
\end{eqnarray}
Or equivalently,
\begin{eqnarray*}
	&&\int_{[\tilde{t},\tilde{t}+1]\times S^3}\left( \abs{\triangle_{S^3} u_i}^2+\abs{\partial_{\tilde{t}}\nabla_{S^3} u_i}^2\right)d\tilde{t}d\theta\\ \nonumber
	&\leq& C \varepsilon^2\left(  e^{-\min\set{8(1-1/p),1}(\log\delta -\tilde{t})} + e^{-\min\set{8(1-1/p),1}(\tilde{t}-\log \lambda_i R)} \right).
\end{eqnarray*}
\end{lem}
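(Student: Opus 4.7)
The plan is to combine the weighted $L^2$-decay $(\ref{1})$ already established for $w_i = u_i - u_i^*$ with the interior $L^p$-estimate of Lemma~\ref{lem:lp} to extract the desired $L^2$-control on tangential derivatives. Since $u_i^*$ is a purely radial function, $\triangle_{S^3}u_i = \triangle_{S^3}w_i$ and $\partial_{\tilde t}\nabla_{S^3}u_i = \partial_{\tilde t}\nabla_{S^3}w_i$, so the integrand on the LHS of $(\ref{eqn:tangentdecay})$ depends only on $w_i$. Rescaling by $\widetilde w_i(y) = w_i(e^{-lL}y)$ to the fixed annulus $A_{-1}\cup A_0\cup A_1$ shifts $\tilde t$ by $lL$ but leaves the cylindrical integrand unchanged, and on $A_0$, where $|y|\sim 1$, the tangential cylindrical derivatives are pointwise dominated by the Euclidean Hessian. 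Thus
\begin{equation*}
\int_{(-lL,-(l-1)L)\times S^3}\bigl(|\triangle_{S^3}u_i|^2 + |\partial_{\tilde t}\nabla_{S^3}u_i|^2\bigr)d\tilde t\,d\theta \leq C\norm{\widetilde w_i}_{W^{2,2}(A_0)}^2.
\end{equation*}

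By the scale invariance of the $\eta_0$-approximate biharmonic property (remark after Definition~\ref{defn:approx}), $\widetilde w_i$ satisfies $(\ref{eqn:approx})$ on $A_{-1}\cup A_0\cup A_1$ with some inhomogeneity $\widetilde h$. Applying Lemma~\ref{lem:lp} with the hypothesis exponent $p\geq 4/3$ and then invoking the four-dimensional Sobolev embedding $W^{4,p}\hookrightarrow W^{2,2}$ gives
\begin{equation*}
\norm{\widetilde w_i}_{W^{2,2}(A_0)} \leq C\bigl(\norm{\widetilde w_i}_{L^p(A_{-1}\cup A_0\cup A_1)} + \norm{\widetilde h}_{L^p(A_{-1}\cup A_0\cup A_1)}\bigr).
\end{equation*}
For the inhomogeneity, the scale invariance of the weighted norm in Definition~\ref{defn:approx}(b) yields
\begin{equation*}
\norm{\widetilde h}_{L^p(A_0)} = \norm{|x|^{4(1-1/p)}h}_{L^p(A_l)} \leq C e^{-4(1-1/p)lL}\norm{\tau(u_i)}_{L^p(B_1)} \leq C\varepsilon\, e^{-4(1-1/p)(l-l_0)L},
\end{equation*}
after choosing $\delta$ small; note that $p \geq 4/3$ forces $4(1-1/p)\geq 1$, so this rate already dominates $\min\{8(1-1/p),1\}/2$. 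For $\norm{\widetilde w_i}_{L^p}$, when $p \leq 2$ Hölder bounds it by $\norm{\widetilde w_i}_{L^2(A_0)}\sim \sqrt{F_l(w_i)}$, to which $(\ref{1})$ applies directly; when $p > 2$, interpolate between this $L^2$-decay and the uniform $L^\infty$-bound implied by $(\ref{3})$ to preserve the decay rate. Squaring yields $(\ref{eqn:tangentdecay})$, and the equivalent pointwise form follows by covering the interval $(-lL,-(l-1)L)$ with overlapping unit sub-intervals.

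The principal obstacle is the bookkeeping of scalings: specifically, verifying that the scale-invariant weighted $L^p$-norm in Definition~\ref{defn:approx}(b) converts the global hypothesis $\norm{\tau(u_i)}_{L^p(B_1)}\leq\Lambda$ into a per-annulus decay factor $e^{-4(1-1/p)(l-l_0)L}$, and that the rescaled metric enters Lemma~\ref{lem:lp} in the admissible (nearly flat) form. A secondary technical point is the $p > 2$ case, where $L^2\not\hookrightarrow L^p$ on a bounded domain and one must interpolate the $F_l$-decay against the uniform pointwise bound $(\ref{3})$ rather than applying Hölder.
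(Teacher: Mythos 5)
Your proposal is correct and matches the paper's proof essentially step by step: rescale $w_i = u_i - u_i^*$ to a fixed annulus, control its $L^2$ norm by the $F_l$-decay already established, bound the inhomogeneity by the scaling behavior of the weighted $L^p$-norm from Definition~\ref{defn:approx}(b), and conclude via Lemma~\ref{lem:lp} together with the Sobolev embedding $W^{4,p}\hookrightarrow W^{2,2}$ in dimension four. The extra $p>2$ case you flag is harmless but unnecessary: the paper effectively restricts to $p\in[4/3,2]$ (and an $L^p(B_1)$ bound on $\tau$ with $p>2$ gives an $L^2(B_1)$ bound by H\"older, so one can always reduce to $p\le 2$), in which range $L^2\hookrightarrow L^p$ on the fixed annulus already suffices without any interpolation against $L^\infty$.
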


\begin{proof}
	Setting
	\begin{equation*}
		\tilde{w}(x)= w_i (e^{-(l-1)L} x),
	\end{equation*}
	we have
	\begin{eqnarray*}
		\norm{\tilde{w}}^2_{L^2(A_0\cup A_1\cup A_2)}&\leq& C (F_{l-1}(w_i)+ F_l(w_i)+ F_{l+1}(w_i))\\
		&\leq& C\varepsilon^2 \left( e^{-\min\set{8(1-1/p),1} (l-l_0)L} + e^{-\min\set{8(1-1/p),1}(l_i-l)L} \right).
	\end{eqnarray*}

	By scaling, $\tilde{w}$ satisfies
\begin{eqnarray}\label{eqn:w}
	&&\triangle_g^2\tilde{w} (r,\theta)= a_1\nabla_g\triangle_g \tilde{w}+a_2\nabla_g^2 \tilde{w}+a_3\nabla_g \tilde{w}+a_4 \tilde{w} \\ \nonumber
&& +
\frac{1}{\abs{\partial B_r}}\int_{\partial B_r}{b_1\nabla_g\triangle_g \tilde{w}+b_2\nabla_g^2 \tilde{w}+b_3\nabla_g \tilde{w}+b_4 \tilde{w}}d\theta+\tilde{h}(x).
\end{eqnarray}
Here
\begin{equation*}
	\tilde{h}(x) = e^{-4(l-1)L} h(e^{-(l-1)L} x)
\end{equation*}
and
\begin{equation*}
	h(x)=\tau(u_i) -\frac{1}{\abs{(\partial B_r)}} \int_{\partial B_r} \tau(u_i) d\sigma.
\end{equation*}

Letting $\lambda=e^{-(l-1)L}$, we have
\begin{eqnarray}
	&& \norm{\tilde{h}}_{L^p (A_0\cup A_1\cup A_2)} \label{2}\\
	&=& \left( \int_{A_0\cup A_1\cup A_2} \abs{\lambda^4 h(\lambda x)}^p dx \right)^{\frac{1}{p}}\notag \\
	&=& \lambda^{4(1-1/p)} (\int_{A_{l-1}\cup A_l \cup A_{l+1}} \abs{h(x)}^p dx)^{1/p}\notag \\
	&\leq& C e^{-4(1-1/p)(l-1)L}\notag \\
	&\leq& C \delta^{4(1-1/p)} e^{-4(1-1/p)(l-l_0)L}\notag\\
	&\leq& C\varepsilon e^{-4(1-1/p)(l-l_0)L},\notag
\end{eqnarray}
if $\delta$ is small.

Lemma \ref{lem:lp}  and the Sobolev embedding theorem imply that
\begin{eqnarray*}
	&& \int_{(-lL,-(l-1)L)\times S^3} ( \abs{\triangle_{S^3} u_i}^2 + \abs{\partial_{\tilde{t}} \nabla_{S^3} u_i}^2 ) d\tilde{t} d\theta \\
	&\leq& C \int_{A_1} (\abs{\nabla^2 \tilde{w}}^2 + \abs{\nabla \tilde{w}}^2) dx \\
	&\leq& C\varepsilon^2 \left( e^{-\min\set{8(1-1/p),1} (l-l_0)L}+ e^{-\min\set{8(1-1/p),1}(l_i-l)L} \right).
\end{eqnarray*}
\end{proof}

\subsection{proof of Theorem \ref{thm:noneck}}
With the preparations of previous subsections, we may now prove Theorem \ref{thm:noneck}. For the rest of the proof, we require $p\geq \frac{4}{3}$ and hence $\min\set{8(1-\frac{1}{p}),1}=1$.

The rest of the proof is some type of Pohozaev argument. It follows the same line of Section 5 of \cite{ours}. However, the proof there made use of the explicit expression of bi-Laplace operator in polar coordinates of $\Real^4$. Since we are now using the round metric on $S^4$, we think it is necessary to justify the reason why the proof still works. As can be seen from below, this is not obvious and the proof depends on some detailed computation.

To begin with, we define a function (for $r<1$)
\begin{equation*}
	t(r)=\int_1^r \frac{1}{\sin s}ds.
\end{equation*}
Obviously, $t'(r)=\frac{1}{\sin r}$. One may want to compare it with $\tilde{t}(r)=\log r$. In fact, we have
\begin{equation*}
	0<\tilde{t}(r)- t(r)<C \quad \mbox{for}\quad r<1
\end{equation*}
and $\tilde{t}'(r)$ is comparable with $t'(r)$.
As a consequence, the result of Lemma \ref{lem:point} can be further rewritten as (noting that $p\geq 4/3$ here)
\begin{eqnarray}\label{eqn:samepoint}
	&&\int_{[{t},{t}+1]\times S^3}\left( \abs{\triangle_{S^3} u_i}^2+\abs{\partial_{{t}}\nabla_{S^3} u_i}^2\right)d{t}d\theta\\ \nonumber
	&\leq& C \varepsilon^2\left(  e^{-(t(\delta) -{t})} + e^{-({t}-t(\lambda_i R))} \right).
\end{eqnarray}

Recall that the metric is given by $g=dr^2 +\sin ^2 r d\theta^2$. (Here $d\theta^2$ is the standard metric on the unit sphere.) To simplify the notations, we write $f(r)=\sin r$ and $f'$ is the derivative of $f$ with respect to $r$. The Laplace operator is
\begin{equation*}
	\triangle_g u = \partial_r^2 + \frac{3f'}{f} \partial_r u + \frac{1}{f^2} \triangle_{S^3} u.
\end{equation*}
By using $\partial_t =f\partial_r$, we may compute
\begin{eqnarray*}
	\triangle_g u &=& f^{-2} \left( \partial_t^2 + 2 f' \partial_t + \triangle_{S^3} \right) u.
\end{eqnarray*}

Writing $\triangle_g u= f^{-2} w$, we obtain
\begin{eqnarray*}
	\triangle^2_g u&=& f^{-2} \left( \partial_t^2  + 2f' \partial_t  +\triangle_{S^3}  \right)(f^{-2} w) \\
	&=& f^{-4}\left( \partial_t^2 + 2 f'\partial_t +\triangle_{S^3} \right) w \\
	&& + f^{-2} \left( \partial_t^2 ( f^{-2}) w + 2 \partial_t (f^{-2}) \partial_t w + 2f' \partial_t (f^{-2}) w \right)\\
	&=& f^{-4}\left( \partial_t^2 - 2 f'\partial_t +\triangle_{S^3} \right) w- 2 \frac{f''}{f^3} w \\
\end{eqnarray*}
By the definition of $w$ and $f''=-f$, we have
\begin{eqnarray*}
	\triangle^2_g u&=& f^{-4}\left( \partial_t^2 - 2 f'\partial_t +\triangle_{S^3} \right) \left( \partial_t^2  +2f'\partial_t +\triangle_{S^3} \right)u + 2 f^{-2} w \\
	&=& f^{-4}\left( (\partial_t^2 +\triangle_{S^3})^2 -4 (f'\partial_t)(f'\partial_t) \right)u \\
	&& + f^{-4} \left( (\partial_t^2 +\triangle_{S^3} )(2f'\partial_t) - (2f'\partial_t) (\partial_t^2 +\triangle_{S^3})\right)u + 2 f^{-2} w \\
\end{eqnarray*}

In comparison with the case of flat metric, $f$ causes some extra terms. It is the primary goal here to show that we can handle these extra terms properly.

\begin{equation*}
	4(f'\partial_t) (f'\partial_t) = 4(f')^2 \partial_t^2 -4 f^2 f' \partial_t,
\end{equation*}
where we used $\partial_t= f' \partial_r$ and $f''=-f$ because $f(r)=\sin r$.

Note that $\triangle_{S^3}$ commutes with $f'\partial_t$ and we compute
\begin{eqnarray*}
	&& \partial_t^2 (2f'\partial_t) -(2f'\partial_t)\partial_t^2 \\
	&=& \partial_t^2 (2f') \partial_t + 2\partial_t (2f')\partial_t^2 \\
	&=& -4f^2 f' \partial_t  -4 f^2 \partial_t^2 .
\end{eqnarray*}
In summary, we have
\begin{equation}
	\triangle^2_g u= f^{-4}\left(  (\partial_t^2+\triangle_{S^3})^2 -4 \partial_t^2 \right)u + 2 f^{-2} w,
	\label{eqn:bilaplace}
\end{equation}
where we used $(f')^2+f^2=1$.

\begin{rem}
	The first term in the above formula is almost the same as the flat case. The importance of the computation is to show the error caused by the round metric is just $f^{-2}w$. Since $w$ involves only first and second order derivatives, it can be controlled by the energy. {\it If} there is a third order derivative term here, then the proof below would fail.
\end{rem}

By the definition of $\tau$, we have
\begin{equation*}
	\int_{S^3} f^4 \triangle^2_g u \cdot \partial_t u d\theta = \int_{S^3} f^4 \tau(u) \cdot \partial_t u d\theta.
\end{equation*}
By (\ref{eqn:bilaplace}), the above is equivalent to
\begin{equation*}
	\int_{S^4} \left( (\partial_t^2 +\triangle_{S^3})^2-4 \partial_t^2 \right)u \partial_t u d\theta = \int_{S^3} (f^4 \tau(u)- 2 f^2 w) \partial_t u d\theta.
\end{equation*}
The left hand side is now completely identical to the form which is dealt with in Section 5 of \cite{ours}. For simplicity, we set
\begin{equation*}
	\tilde{\tau}(u)= \tau(u)-2 f^{-2}w=\tau(u)-2\triangle_g u.
\end{equation*}
Since $u$ has finite energy, $\tilde{\tau}(u)$ is also uniformly bounded in $L^p$ for $p\in [4/3,2]$.

The same computation as in \cite{ours} gives
\begin{eqnarray}\label{eqn:ode}
	&&\partial_t \int_{S^3}  \partial_t u \partial_t ^2 u d\theta -\int_{S^3} \frac{3}{2} \abs{\partial_t^2 u}^2 +2 \abs{\partial_t u}^2 d\theta\\ \nonumber
	&=&\int_{S^3}-\frac{1}{2}\abs{\triangle_{S^3} u}^2 + \abs{\partial_t \nabla_{S^3} u}^2+\int_{-\infty}^t\int_{S^3}f^4 \tilde{\tau} (u) \cdot \partial_t u ds d\theta.
\end{eqnarray}
We will integrate the above inequality from $t(\lambda_i R)$ to $t(\delta)$. We estimate the right hand side first.
Thanks to (\ref{eqn:samepoint}),
we have
\begin{equation*}
	\int_{t(\lambda_i R)}^{t(\delta)} \int_{S^3}-\frac{1}{2}\abs{\triangle_{S^3} u}^2 + \abs{\partial_t \nabla_{S^3} u}^2d\theta\leq C\varepsilon^2.
\end{equation*}
Transforming back to $x-$coordinates by $\partial t= f \partial r$ and $d\sigma=f^3 d\theta$, we get
\begin{eqnarray*}
	&&\abs{\int_{t(\lambda_iR)}^{t(\delta)}\int_{-\infty}^{\tilde{t}} \int_{S^3} f^4 \tilde{\tau}(u) \cdot \partial_t u d\theta ds  d\tilde{t}} \\
	&\leq&\int_{t(\lambda_iR)}^{t(\delta)} \int_{B_{r(\tilde{t})}} \abs{\tilde{\tau}(u)} \abs{f \partial_r u}  dxd\tilde{t} \\
	&\leq&\int_{\lambda_iR}^{\delta} \int_{B_{r}} \abs{\tilde{\tau}(u)} \abs{\nabla u}  dxdr \\
	&\leq&C \delta \norm{\tilde{\tau}(u)}_{L^{4/3}(B_1)} \norm{\nabla u}_{L^4(B_1)}.
\end{eqnarray*}
In summary, the integration of (\ref{eqn:ode}) yields (by taking $\sigma$ small with respect to $\varepsilon$)
\begin{eqnarray}\label{eqn:total}
	&&\int_{t(\lambda_i R)}^{t(\delta)} \int_{S^3}\frac{3}{2} \abs{\partial_t^2 u}^2 + 2\abs{\partial_t u}^2 d\theta dt \\ \nonumber
&\leq&
C \left(\int_{\set{t(\delta)} \times S^3}  |\partial_t u \partial_t ^2 u| d\theta+\int_{\set{t(\lambda_{iR})}\times S^3}  |\partial_t u \partial_t ^2 u| d\theta+\varepsilon^2\right)\\\nonumber
&\leq&C  \norm{\partial_t u}_{L^2(\{t(\delta)\} \times S^3)}\norm{ \partial_t ^2 u}_{L^2(\{t(\delta)\} \times S^3)}\\\nonumber
&&+ C\norm{\partial_t u}_{L^2(\{t(\lambda_iR)\} \times S^3)}\norm{ \partial_t ^2 u}_{L^2(\{t(\lambda_iR)\} \times S^3)}+ C \varepsilon^2\\\nonumber
&\leq& C\varepsilon^2,
\end{eqnarray}
where the last inequality comes from the (\ref{3}) and Sobolev embedding and trace theorem. In fact, we have $W^{4,p}(\Omega)$ embeds into $W^{3,2}(\Omega)$, which in turn embeds into $W^{2,2}(\partial \Omega)$.

\begin{rem}
	We remark that in fact, the argument above gives an independent proof of the energy identity in the blow up analysis of biharmonic maps with tension field in $L^p$ for some $p\geq\frac{4}{3}$.
\end{rem}
For some fixed $t_0\in [t(\lambda_i R), t(\delta)]$, set
\begin{equation*}
	F(t)=\int_{t_0-t}^{t_0+t}\int_{S^3} \frac{3}{2} \abs{\partial_t^2 u}^2 + 2\abs{\partial_t u}^2 d\theta dt.
\end{equation*}
$F$ is defined for $0\leq t\leq \min \set{t_0- t(\lambda_i) R, t(\delta) -t_0}$. Integrating (\ref{eqn:ode}) from $t_0-t$ to $t_0+t$, we obtain
\begin{eqnarray*}
	F(t)&\leq& \frac{1}{2\sqrt{3}} \left( \int_{\set{t_0-t}\times S^3} +\int_{\set{t_0+t}\times S^3} \right) \frac{3}{2}\abs{\partial_t^2 u}^2 + 2 \abs{\partial_t u}^2 d\theta\\
	&& + \int_{t_0-t}^{t_0+t} \left( \int_{S^3}-\frac{1}{2}\abs{\triangle_{S^3} u}^2 + \abs{\partial_t \nabla_{S^3} u}^2 d\theta +\int_{-\infty}^{\tilde{t}} \int_{S^3}f^4 \tilde{\tau} (u) \cdot \partial_t u ds d\theta \right)  d\tilde{t}.
\end{eqnarray*}

With the help of (\ref{eqn:samepoint}), we can have
\begin{equation*}
	\int_{t_0-t}^{t_0+t} \int_{S^3} \frac{-1}{2} \abs{\triangle_{S^3} u}^2 +\abs{\partial_t \nabla_{S^3} u}^2 d\theta ds \leq C\varepsilon^2 \left( e^{-(t(\delta)-t_0)}+ e^{-(t_0-t(\lambda_i R))} \right)e^t.
\end{equation*}

On the other hand,
\begin{eqnarray*}
	&&\abs{\int_{t_0-t}^{t_0+t}\int_{-\infty}^{\tilde{t}} \int_{S^3} f^4 \tilde{\tau}(u) \cdot \partial_t u ds d\theta d\tilde{t}} \\
	&\leq&\int_{t_0-t}^{t_0+t} \int_{B_{r(\tilde{t})}} \abs{\tau(u)} \abs{\nabla u} \abs{f} dxd\tilde{t} \\
	&\leq&C e^{\frac{1}{2}(t_0+t)} \norm{\tau(u)}_{L^{4/3}(B_1)} \norm{\nabla u}_{L^4(B_1)}\\
	&\leq& C \delta^{1/2} e^{-1/2(\log \delta -t_0)} e^{t/2}\\
	&\leq& C \delta^{1/2} e^{-1/2(t(\delta) -t_0)} e^{t}\\
\end{eqnarray*}
\begin{rem}
	Note that since $r'(t)=\sin r$ and $\frac{1}{2}r\leq \sin r\leq r$ for $r<1$, we have
	\begin{equation*}
		e^t< r(t)< e^{t/2}
	\end{equation*}
	for $t<0$.
\end{rem}
Hence, if $\delta$ is small, we obtain
\begin{equation*}
	F(t)\leq \frac{1}{2}\partial_t F(t) + C\varepsilon^2 \left( e^{-\frac{1}{2}(t(\delta)-t_0)} + e^{-\frac{1}{2}(t_0-t(\lambda_i R))} \right)e^t.
\end{equation*}
Multiplying $e^{-2t}$ to both sides of the inequality, we have
\begin{equation*}
	(e^{-2t} F(t))'\geq -C\varepsilon^2 \left( e^{- \frac{1}{2}(t(\delta) -t_0)}+ e^{- \frac{1}{2}(t_0-t(\lambda_i R))} \right) e^{-t}.
\end{equation*}
We assume without loss of generality that $t( \delta)- t_0\leq t_0-t(\lambda_i R)$. Then, we integrate the above inequality from $t=1$ to $t=t(\delta) -t_0$ to get
\begin{eqnarray*}
	F(1)&\leq& e^{-2(t(\delta) -t_0)+2} F(t(\delta)- t_0) + C\varepsilon^2 \left( e^{- \frac{1}{2}(t(\delta) -t_0)}+ e^{- \frac{1}{2}(t_0-t(\lambda_i R))} \right) \\
	&\leq&  C\varepsilon^2 \left( e^{- \frac{1}{2}(t(\delta) -t_0)}+ e^{- \frac{1}{2}(t_0-t(\lambda_i R))} \right).
\end{eqnarray*}
Here we used (\ref{eqn:total}).

Together with (\ref{eqn:samepoint}), we obtain 
\begin{equation*}
	\int_{t_0-1}^{t_0+1}\int_{S^3}|\tilde{\nabla}^2u|^2+|\tilde{\nabla}u|^2d\theta dt\leq C\varepsilon^2 \left( e^{- \frac{1}{2}(t(\delta) -t_0)}+ e^{- \frac{1}{2}(t_0-t(\lambda_i R))} \right),
\end{equation*}
Here $\tilde{\nabla}$ is the gradient of $[t(\lambda_i  R), t(\delta)]\times S^3$ with the product metric. Recall that $\abs{\tilde{t}(r)-t(r)}$ is bounded by some universal constant and $\partial_t$ and $\partial_{\tilde{t}}$ are comparable. Hence, we can translate the above decay estimate into a decay with respect to $\tilde{t}=\log r$.
\begin{equation*}
	\int_{\tilde{t}_0-1}^{\tilde{t}_0+1}\int_{S^3}|\tilde{\nabla}^2u|^2+|\tilde{\nabla}u|^2d\theta d\tilde{t}\leq C\varepsilon^2 \left( e^{- \frac{1}{2}(\log (\delta) -\tilde{t}_0)}+ e^{- \frac{1}{2}(\tilde{t}_0-\log(\lambda_i R))} \right),
\end{equation*}

Direct computation shows that
\begin{eqnarray*}
	\int_{B_{e^{\tilde{t}_0+1}}\setminus B_{e^{\tilde{t}_0-1}}}|\nabla^2u|^2+\frac{1}{|x|^2}|\nabla u|^2dx
	&\leq&C\int_{\tilde{t}_0-1}^{\tilde{t}_0+1}\int_{S^3}|\tilde{\nabla}^2u|^2+|\tilde{\nabla}u|^2d\theta d\tilde{t} \\
	&\leq&C\varepsilon^2 \left( e^{-\frac{1}{2} (\log \delta -\tilde{t}_0)}+ e^{- \frac{1}{2}(\tilde{t}_0-\log \lambda_i R)} \right).
\end{eqnarray*}

Then by Sobolev embedding and the $\varepsilon_0-$regularity (Theorem \ref{thm:regularity}), we have
\begin{eqnarray*}
	&&osc_{((\tilde{t}_0-1/2,\tilde{t}_0+1/2)\times S^3)}u \\
	&\leq& C(\int_{B_{e^{\tilde{t}_0+1}}\setminus B_{e^{\tilde{t}_0-1}}}|\nabla^2u|^2+
	\frac{1}{|x|^2}|\nabla u|^2dx)^{1/2}+e^{4\tilde{t}_0(1-1/p)}\|\tau(u)\|_{L^p(B_{e^{\tilde{t}_0+1}}\setminus B_{e^{\tilde{t}_0-1}})} \\
	&\leq& C\varepsilon \left( e^{- \frac{1}{4}(\log \delta -\tilde{t}_0)}+ e^{- \frac{1}{4}(\tilde{t}_0-\log \lambda_i R)} \right).
\end{eqnarray*}

It is easy to derive the no neck estimate from here. Hence, we complete the proof of Theorem.

\section{bounding width by energy}\label{sec:width}
Let $M$ be the manifold in the Theorem \ref{thm:main} and $g$ be any Riemannian metric on $M$. Since $M=M'\# T^m$, there is an embedded sphere $S$ of dimension $m-1$ in $M$ which separates $M$ into $M_1$ and $M_2$ and $M/ M_2$ is homeomorphic to $M'$ and $M/M_1$ is homeomorphic to $T^m$. Here $M/M_i$ is the quotient topology space by identifying all points in $M_i$ as one point.

Let $\tilde{M}$ be a cover of $M$ and $\tilde{g}$ be the lift of $g$. For a map $u:S^4 \to M$, we define the width of $u$ as
\begin{equation*}
	W(u)= \max_{x,y\in S^4} d_{(\tilde{M},\tilde{g})} ( \tilde{u}(x), \tilde{u}(y))
\end{equation*}
for a lift $\tilde{u}$ of $u$.
Since the lift is unique up to the action of the deck transformation of $\tilde{M}$, the definition is independent of the choice of $\tilde{u}$.

\begin{rem}
	It is perhaps more natural to use the universal cover. Theoretically, any cover will make the proof work. Since the main purpose is to construct examples, we use the definition which is convenient for our purpose. Of cause, the width depends on the choice of the cover.
\end{rem}

Similarly, we can define the width of $u$ from $\Real^4$ to $M$ by
\begin{equation*}
	W(u)=\sup_{x,y\in \Real^4} d_{(\tilde{M},\tilde{g})} (\tilde{u}(x), \tilde{u}(y))
\end{equation*}
for a lift $\tilde{u}$.

\begin{rem}
	Since $\Real^4$ is non-compact, it is possible that $W(u)$ is not finite. For application in this paper, we shall only be interested in the bubble map $u:\Real^4 \to M$. There are several ways to see that for a bubble map with finite energy this width is finite. First, one can compose $u$ with the stereographic projection and prove a removable singularity theorem for a PDE system similar but not identical to the biharmonic map equation as Wang did for quasi-biharmonic maps in Lemma 3.4 \cite{Wsphere}. Second, the proof of removable singularity theorem in \cite{ours} can be applied in this case. Finally, since all such bubble maps come from the limit of some biharmonic map sequence, as remarked near the end of Section 2 of \cite{ours}, this is a consequence of the main theorem in \cite{ours}.
\end{rem}

The main result of this section is
\begin{lem}
	\label{lem:finiteR}
	For any $C_1>0$, there is another constant $C_2$ depending  on $C_1$ and the geometry of $M$ such that  any biharmonic map $u$ from $\Real^4$ (or $S^4$ ) to $M$ with $\mathcal E(u)<C_1$ satisfies that $W(u)<C_2$.	
\end{lem}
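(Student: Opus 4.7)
The plan is to argue by contradiction using a bubbling analysis combined with the no neck theorem from Section \ref{sec:neck}, and to close the loop by an induction on the energy level.

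First, by the removable singularity theorem for biharmonic maps with finite energy (referenced in the remark just above the lemma), a biharmonic map $u:\Real^4\to M$ with $\mathcal E(u)<C_1$ extends, via stereographic projection, to a smooth biharmonic map from $S^4$ to $M$. Since $S^4$ and $\Real^4$ are both simply connected, $u$ admits a global lift $\tilde u$ to the cover $\tilde M$, so it suffices to treat the case of biharmonic maps $u:S^4\to M$.

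Next, I would set up the induction. Let $C_1^{\ast}$ be the supremum of all values $C_1>0$ for which the conclusion of the lemma holds. The goal is to show $C_1^{\ast}=\infty$.  For the base case, suppose $C_1<\eps_0$, the $\eps_0$-regularity constant of Theorem \ref{thm:regularity}. Then for any point $x\in S^4$, applying $\eps_0$-regularity on a geodesic ball of a fixed small radius shows that $u$ has a uniform $C^1$ bound depending only on $\eps_0$ and the geometry of $S^4$. Integrating this bound along paths in $S^4$ and lifting to $\tilde M$ (a path of small length lifts to a path of the same length) gives $W(u)\le C_2$. Hence $C_1^{\ast}\ge\eps_0$.

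For the inductive step, assume toward contradiction that $C_1^{\ast}<\infty$. Pick a sequence of biharmonic maps $u_i:S^4\to M$ with $\mathcal E(u_i)\le C_1^{\ast}$ and $W(u_i)\to\infty$. By the standard bubbling analysis for biharmonic maps with bounded $W^{2,2}$ energy (which applies as a special case of Theorem \ref{thm:noneck} with $\tau\equiv 0$), after passing to a subsequence, $u_i$ converges weakly in $W^{2,2}$ to a smooth biharmonic map $u_\infty:S^4\to M$ (smoothness by \cite{W04}) and bubbles off at most finitely many nontrivial biharmonic spheres $\omega_1,\dots,\omega_k:S^4\to M$, each carrying energy at least $\eps_0$. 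In particular
\[
\mathcal E(u_\infty)+\sum_{j=1}^{k}\mathcal E(\omega_j)\le C_1^{\ast}.
\]
If $k=0$, then $u_i\to u_\infty$ smoothly, so $W(u_i)\to W(u_\infty)<\infty$, contradicting $W(u_i)\to\infty$. If $k\ge 1$, then $\mathcal E(u_\infty)\le C_1^{\ast}-\eps_0$ and each $\mathcal E(\omega_j)\le C_1^{\ast}-\eps_0$, so by the inductive hypothesis the widths $W(u_\infty)$ and $W(\omega_j)$ are all bounded by constants depending only on $C_1^{\ast}$ and $M$.

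The final step is to use Theorem \ref{thm:noneck} to assemble these width bounds. Away from the finitely many bubble centers, $u_i\to u_\infty$ in $C^0$; on fixed small neighborhoods of each bubble center, after rescaling one sees $\omega_j$ and its neck region out to infinity; on the annular necks connecting the two, Theorem \ref{thm:noneck} (the no neck estimate \eqref{eqn:noneck2}) gives that $\mathrm{osc}\,u_i\to 0$. Lifting to $\tilde M$, since the neck oscillations are smaller than the injectivity radius, they lift without wrapping, and we get
\[
W(u_i)\;\le\;W(\tilde u_\infty)+\sum_{j=1}^{k}W(\tilde\omega_j)+o(1),
\]
which is bounded uniformly in $i$. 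This contradicts $W(u_i)\to\infty$ and completes the induction.

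The main obstacle I anticipate is the bookkeeping in the last step: one must show that the no neck oscillation bound in $M$ implies a corresponding bound in the cover $\tilde M$, and that the widths of $u_\infty$ and of the bubbles add up (rather than compound) to control $W(u_i)$. Both follow from the fact that small-oscillation pieces lift as small-oscillation pieces in $\tilde M$, so that the image $\tilde u_i(S^4)$ is contained in a neighborhood of the union of a lift of $\tilde u_\infty(S^4)$ with suitably translated lifts of the $\tilde\omega_j(S^4)$, connected by sets of arbitrarily small diameter.
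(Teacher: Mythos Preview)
Your overall architecture---contradiction, bubbling, and the no-neck theorem to bound $W(u_i)$ by the widths of the weak limit and the bubbles---matches the paper's. Two points, however, deserve correction.

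First, the reduction from $\Real^4$ to $S^4$ does not work as stated. The extrinsic biharmonic map equation is \emph{not} conformally invariant in dimension four, so composing a biharmonic map $u:\Real^4\to M$ with stereographic projection does not produce a biharmonic map on $(S^4,g_{\mathrm{round}})$; it produces a solution of a related but different fourth-order system (this is precisely what the remark preceding the lemma alludes to when it mentions Wang's quasi-biharmonic maps). The paper therefore treats the $\Real^4$ case directly: it uses the scale invariance of $\mathcal E$ and $W$ to normalize so that the energy outside $B_1$ is below $\varepsilon_2$, confining all bubble points to $\bar B_1$, and then invokes Lemma~\ref{lem:infinity} to upgrade the local smooth convergence outside the bubble set to \emph{uniform} convergence on all of $\Real^4\setminus B_2$. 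This is what replaces your removable-singularity reduction.

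Second, the induction on the energy level is unnecessary and, as written, has a gap: when $k=1$ and $u_\infty$ is constant, the single bubble may carry energy equal to $C_1^{\ast}$, so your claim that each $\mathcal E(\omega_j)\le C_1^{\ast}-\varepsilon_0$ fails and the inductive hypothesis does not apply. The paper sidesteps this entirely. After passing to a subsequence, $u_\infty$ and $\omega_1,\dots,\omega_l$ are \emph{fixed} biharmonic maps, each with a \emph{fixed} finite width (finiteness of the width for a single such map is exactly the content of the remark preceding the lemma). Hence the inequality
\[
\limsup_{k\to\infty} W(u_k)\ \le\ W(u_\infty)+\sum_{i=1}^{l}W(\omega_i)+o(\delta,R)
\]
already has a finite right-hand side---not a uniform constant depending only on $C_1$, but a specific finite number attached to this particular subsequential limit---and that alone contradicts $W(u_k)\to\infty$. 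No quantitative bound on the widths of the individual pieces is required, so no induction is required.
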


The proof uses the compactness properties of biharmonic maps (taking the bubbling into account). The non-compactness of $\Real^4$ causes some technical problem. We need the following lemma to control the energy decay at the infinity.
\begin{lem}
	\label{lem:infinity}
	There is a constant $\varepsilon_2>0$ depending on $M$. If $u:\Real^4\to M$ is a biharmonic map satisfying
	\begin{equation*}
		\int_{\Real^4 \setminus B_1} \abs{\nabla^2 u}^2 + \abs{\nabla u}^4 dx <\varepsilon_2,
	\end{equation*}
	then $u$ is uniformly continuous at the infinity in the sense that for any $\varepsilon>0$, there is $R>0$ independent of $u$ such that
	\begin{equation*}
		\mbox{osc}_{\Real^4 \setminus B_R} u <\varepsilon.
	\end{equation*}
\end{lem}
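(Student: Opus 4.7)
I would argue by contradiction, combining a rescaling/compactness argument with a Liouville-type result; the difficult case of widely separated scales is handled by invoking the neck analysis of Section \ref{sec:neck}. Suppose the conclusion fails: there exist $\varepsilon>0$, biharmonic maps $u_n:\mathbb R^4\to M$ satisfying the energy hypothesis (with $\varepsilon_2$ to be fixed small, in particular smaller than the $\varepsilon_0$ of Theorem \ref{thm:regularity}), and points $x_n,y_n$ with $|x_n|,|y_n|\to\infty$ and $|u_n(x_n)-u_n(y_n)|\ge\varepsilon$. WLOG $|x_n|\le|y_n|$; set $r_n=|x_n|$ and rescale $\tilde u_n(z):=u_n(r_nz)$. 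By the scale-invariance of $\mathcal E$ in dimension four, each $\tilde u_n$ is biharmonic and has energy at most $\varepsilon_2$ on any compact $K\subset\mathbb R^4\setminus\{0\}$ for $n$ large; Theorem \ref{thm:regularity} (with $\tau(u_n)=0$) combined with compactness of $M$ yields a subsequence converging in $C^0_{\mathrm{loc}}(\mathbb R^4\setminus\{0\})$ to a biharmonic $\tilde u_\infty$. The limit has energy at most $\varepsilon_2$ and extends smoothly to $\mathbb R^4$ by the removable singularity theorem cited in the paper's remark preceding Lemma \ref{lem:finiteR}.

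A Liouville-type argument (scaling $\tilde u_\infty$ on $B_R(x_0)$ for $R\to\infty$ and using $\varepsilon_0$-regularity to get $|\nabla\tilde u_\infty(x_0)|\le C/R\to 0$) then forces $\tilde u_\infty\equiv c$ for some constant $c\in M$. Since $\tilde x_n:=x_n/r_n$ lies on the unit sphere, a compact subset of $\mathbb R^4\setminus\{0\}$, we obtain $u_n(x_n)\to c$; if $|y_n|/r_n$ stays bounded along a subsequence, then $\tilde y_n:=y_n/r_n$ also stays in a fixed compact subset of $\mathbb R^4\setminus\{0\}$ and $u_n(y_n)\to c$ as well, contradicting $|u_n(x_n)-u_n(y_n)|\ge\varepsilon$.

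The main obstacle is the remaining case $|y_n|/r_n\to\infty$, where no single rescaling places both points in a fixed compact subset of $\mathbb R^4\setminus\{0\}$. Rescaling instead by $|y_n|$ produces, along a further subsequence, a second constant $c'$ with $u_n(y_n)\to c'$, and one must then show $c=c'$. This is a genuine no-neck problem on the long annular region $\{Mr_n<|x|<|y_n|/M\}$ (for any fixed large $M$), whose log-scale length tends to $\infty$, whose total energy is at most $\varepsilon_2$, and which must bridge $u_n\approx c$ near its inner boundary to $u_n\approx c'$ near its outer boundary. I would apply the machinery of Section \ref{sec:neck} directly to this annulus: decomposing $u_n=u_n^*+w_n$ into spherical average plus tangential part, the tangential part $w_n$ satisfies the approximate biharmonic function equation of Definition \ref{defn:approx} with small coefficients and vanishing $h$-term (since $\tau(u_n)=0$), so the three-circle lemma Theorem \ref{thm:tcapprox} gives $F_l(w_n)\le C\bigl(e^{-L(l-l_0)}+e^{-L(l_n-l)}\bigr)$ on the annulus with endpoints $l_0,l_n$; a Pohozaev-type estimate of the kind used in the proof of Theorem \ref{thm:noneck} yields matching geometric decay of the radial part $u_n^*$. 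Since $l_n-l_0\to\infty$, the oscillation on the middle of the annulus tends to $0$, forcing $c=c'$ and contradicting $|c-c'|\ge\varepsilon$.
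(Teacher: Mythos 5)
Your argument up to and including the "easy case" ($|y_n|/r_n$ bounded) is sound, but the hard case $|y_n|/r_n\to\infty$ contains a genuine gap. The three-circle estimate you invoke, $F_l(w_n)\le C\left(e^{-L(l-l_0)}+e^{-L(l_n-l)}\right)$, only makes $F_l$ (and, after the Pohozaev step, the per-shell oscillation of $u_n$) small for $l$ far from \emph{both} endpoints; near $l_0$ or $l_n$ it gives nothing better than the a priori Poincar\'e bound $F_l\le C\varepsilon_2^{1/2}$. Summing the resulting per-shell oscillation estimates over $l_0\le l\le l_n$ therefore yields $\mbox{osc}_{T_n}u_n\le C\varepsilon_2^{1/4}$, a fixed quantity that does \emph{not} tend to $0$ merely because $l_n-l_0\to\infty$. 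So "the oscillation on the middle of the annulus tends to $0$" is true but does not force $c=c'$: what separates $c$ from $c'$ is the oscillation between each boundary shell and the middle, which remains $O(\varepsilon_2^{1/4})$. At best you conclude $|c-c'|\le C\varepsilon_2^{1/4}$, which fails to contradict $|c-c'|\ge\varepsilon$ when $\varepsilon$ is small relative to $\varepsilon_2$ --- exactly the regime the lemma must cover, since $\varepsilon_2$ is fixed once and $\varepsilon>0$ is arbitrary. The missing ingredient that would repair this is the observation that the \emph{boundary data} $F_{l_0}(w_n)$, $F_{l_n}(w_n)$, together with the Pohozaev boundary integrals $\int_{S^3}\abs{\partial_t u_n\,\partial_t^2 u_n}$ at the endpoints, tend to zero as $n\to\infty$, because your rescaled limits make $u_n$ $C^1$-close to a constant on the boundary shells of $T_n$; only with this input does $\sup_l F_l(w_n)\to 0$ and does the total oscillation over $T_n$ go to zero.

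The paper's proof is direct and avoids the whole contradiction/rescaling/Liouville apparatus: it applies Theorem \ref{thm:tcapprox} and the Pohozaev argument of Section \ref{sec:neck} on the one-ended region $\Real^4\setminus B_1=\bigcup_{i\le 0}A_i$. Since the index $i$ ranges over a half-line and each $F_i(w)$ is uniformly bounded by Poincar\'e (using $\varepsilon_2$), the three-circle dichotomy forces one-sided exponential decay of $F_i(w)$ as $i\to-\infty$, i.e., toward $\abs{x}\to\infty$, with constants independent of $u$; the Pohozaev and $\varepsilon_0$-regularity steps then give $\mbox{osc}_{\Real^4\setminus B_R}u\to 0$ as $R\to\infty$, uniformly over the class of maps under consideration. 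This is precisely the removable-singularity-at-infinity version of Section 6 of \cite{ours}. Even if you supply the missing boundary-decay step, your approach is a considerably more roundabout route to the same exponential decay.
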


\begin{proof}
	The proof is just another version of Section 6 of \cite{ours}. The only difference is that for a removable singularity theorem, we study $B_1\setminus \set{0}$, which is
	\begin{equation*}
		B_1\setminus \set{0}=\bigcup_{i=1}^\infty A_i
	\end{equation*}
	where
	\begin{equation*}
		A_i=B_{e^{-(i-1)L}}\setminus B_{e^{-iL}},
	\end{equation*}
	while in this lemma, we study the asymptotic behavior of $u$ on
	\begin{equation*}
		\Real^4\setminus B_1= \bigcup_{i=-\infty}^{0} A_i.
	\end{equation*}
	In the proof of the removable singularity theorem, we prove exponential decay as $i\to \infty$ ($\abs{x}\to 0$), while here we prove exponential decay as $i\to -\infty$ ($\abs{x}\to \infty$). We need $\varepsilon_2$ to be small, so that we can use Theorem \ref{thm:tcapprox} on $A_{i-1}\cup A_{i}\cup A_{i+1}$ for $i=-1,-2,\cdots$.

	This lemma follows from the exponential decay of $\abs{\nabla_{S^3} u}$ and $\abs{\partial_t u}$.
\end{proof}

\begin{proof} [Proof of Lemma \ref{lem:finiteR}]
	We only prove the case for $\Real^4$ and the case for $S^4$ is simpler.
	If the lemma is not true, we can find a sequence of biharmonic maps $u_k:\Real^4\to M$ with $\mathcal E(u_k)\leq C_1$, but
	\begin{equation*}
		\lim_{k\to \infty} W(u_k)=+\infty.
	\end{equation*}
	Since $\mathcal E(u_k)$ and $W(u_k)$ are invariant under the scaling,
	we may assume without loss of generality that
	\begin{equation}\label{eqn:a1}
		\int_{\Real^4\setminus B_1} \abs{\nabla^2 u_k}^2 +\abs{\nabla u_k}^4 dx <\varepsilon_2.
	\end{equation}
	(\ref{eqn:a1}) implies that the bubble points are restricted to $\bar B_1$.

	Let $u_\infty$ be the weak limit. Since there is no bubble outside $\bar B_1$, $u_k$ converges to $u_\infty$ on $B_R\setminus B_2$ uniformly for fixed $R$. Together with Lemma \ref{lem:infinity} and (\ref{eqn:a1}), the convergence is uniform on $\Real^4\setminus B_2$.
	

The bubbles are described as follows. Assume that there are $l$ bubbles (including ghost bubbles, which is just trivial map), $\omega_i(i=1,\cdots,l)$ and there are $m(m\leq l)$ blow-up points $p_i(i=1,\cdots,m)$ with $p_i\subset B_2$. Each $\omega_i$ is the limit of
\begin{equation*}
	w_{i,k}(x):=u_k(\lambda_{i,k} x + x_{i,k}).
\end{equation*}
Since there could be bubbles on top of $\omega_i$, the convergence is strong on the domain
\begin{equation*}
	\Omega_{i,k} =B_R \setminus \left( \bigcup_s B_\delta(y_{k,s}) \right),
\end{equation*}
where we use $s$ to parameterize the bubbles on top of $\omega_i$.
Moreover, for each bubble $\omega_i$, there is a neck region of the form $B_{r_2}(*)\setminus B_{r_1}(*)$, which we denote by $N_{i,k}$. There is no need to be precise about $r_1$, $r_2$ and the centers of the balls, it suffices to notice that the no neck theorem implies that
\begin{equation}\label{eqn:noneck}
	\lim_{k\to \infty} \mbox{osc}_{N_{i,k}} u_k = o(\delta,R),
\end{equation}
where $o(\delta,R)$ goes to zero when $\delta\to 0$ and $R\to \infty$.

By definition, if $\tilde{u}_k$ is a lift of $u_k$, we have
\begin{eqnarray}\label{eqn:W}
	W(u_k)&=& \sup_{y,z\in \Real^4} d_{ (\tilde{M},\tilde{g})} (\tilde{u}_k(y), \tilde{u}_k(z)) \\ \nonumber
	&\leq&\sum_{i=1}^l \sup_{y,z\in \Omega_{i,k}} d_{(\tilde{M},\tilde{g})}(\tilde{u}_k(\lambda_{i,k}y+x_{i,k}),\tilde{u}_k(\lambda_{i,k}z +x_{i,k})) \\\nonumber
	&& + \sup_{y,z\in N_{i,k}} d_{(\tilde{M},\tilde{g})}(\tilde{u}_k(y),\tilde{u}_k(z)) \\\nonumber
	&& + \sup_{y,z\in \Real^4 \setminus \bigcup_{i=1}^m B_\delta(p_i)} d_{(\tilde{M},\tilde{g})}(\tilde{u}_k (y), \tilde{u}_k(z)).
\end{eqnarray}

Now we give an upper bound for the left hand side of the above equation.
For the first line, since $w_{i,k}$ converges strongly to $\omega_i$ on $\Omega_{i,k}$, we have
\begin{equation*}
	\max_{y\in \Omega_{i,k}} d_{(M,g)} (w_{i,k}(y), \omega_i(y))\leq o(1).
\end{equation*}
Here $o(1)$ goes to zero as $k\to \infty$.
Noticing that $\tilde{u}_k(\lambda_{i,k}x+x_{i,k})$ is a lift of $w_{i,k}(x)$ (defined on $\Omega_{i,k}$), we can find a lift of $\omega_i$, denoted by $\tilde{w}_i$ such that
\begin{equation*}
	\max_{y\in \Omega_{i,k}} d_{(\tilde{M},\tilde{g})} (\tilde{u}_{k}(\lambda_{i,k}y+x_{i,k}), \tilde{\omega}_i(y))\leq o(1).
\end{equation*}
Therefore, we have
\begin{equation}\label{eqn:w1}
	\limsup_{k\to \infty}\sup_{y,z\in \Omega_{i,k}} d_{(\tilde{M},\tilde{g})}(\tilde{u}_k(\lambda_{i,k}y+x_{i,k}),\tilde{u}_k(\lambda_{i,k}z +x_{i,k}))\leq W(\omega_i).
\end{equation}

For the second line, we need some general fact from Riemannian geometry as follows. There is some small $\sigma>0$ depending on both $(M,g)$ and $(\tilde{M},\tilde{g})$ such that for any geodesic ball $B\subset M$ of radius $\sigma$ and its lift $\tilde{B}\subset \tilde{M}$, we have that $(B, d_{(M,g)})$ is isometric to $(\tilde{B}, d_{(\tilde{M},\tilde{g})})$ as metric spaces.

Thanks to (\ref{eqn:noneck}), for small $\delta$ and large $R$ so that the image $u_k(N_{i,k})$ lies in a geodesic ball of radius $\sigma$, we have
\begin{equation}\label{eqn:w2}
	\limsup_{k\to \infty}  \sup_{y,z\in N_{i,k}} d_{(\tilde{M},\tilde{g})}(\tilde{u}_k(y),\tilde{u}_k(z))\leq Co(\delta,R).
\end{equation}

To bound the last line in (\ref{eqn:W}), it suffices to note that
$u_k$ converges uniformly on $\Real^4 \setminus \bigcup_m B_\delta(p_i)$ to $u_\infty$.
To see this, we note that $u_k$ converges strongly on $B_2\setminus \cup B_{\sigma}(p_i)$ and $u_k$ converges strongly on $\Real^4 \setminus B_2$ as remarked earlier.
Hence,
\begin{equation}\label{eqn:w3}
	\limsup_{k\to \infty}\sup_{y,z\in \Real^4 \setminus \bigcup_{i=1}^m B_\delta(p_i)} d_{(\tilde{M},\tilde{g})}(\tilde{u}_k (y), \tilde{u}_k(z)) \leq W(u_\infty).
\end{equation}

(\ref{eqn:w1}), (\ref{eqn:w2}) and (\ref{eqn:w3}) add up to give an upper bound for $W(u_k)$, which contradicts the assumption that $\lim_{k\to \infty} W(u_k)=\infty$ and hence proves the lemma.
\end{proof}

\section{proof of the main theorem} \label{sec:proof}
Let $u(t)$ be a solution to (\ref{eqn:flow}) with $u(0)=u_0$. Along the flow,
\begin{equation*}
	\frac{d}{dt} E(u)\leq 0.
\end{equation*}
Hence, $E(u)$ is uniformly bounded (before the possible blow-up at least). Since the target manifold is compact, $u$ is bounded and hence $\mathcal E(u)$ is also uniformly bounded.

The key observation to the proof is that for some $C_1>0$ and arbitrarily large $C_3$, we can choose $u_0$ with $\mathcal E(u_0)<C_1$ and any smooth $u'$ homotopic to $u_0$ satisfies $W(u')>C_3$.

Assuming that such $u_0$ is found, we claim that $u(t)$ must blow-up in finite time and hence Theorem \ref{thm:main} is proved. If otherwise, the solution exists for any $t>0$. Since
\begin{equation*}
	\int_0^\infty \int_{S^4} \abs{\partial_t u}^2 dv dt<\infty,
\end{equation*}
we may choose a sequence of $t_k$ going to $\infty$ such that
\begin{equation*}
	\lim_{k\to \infty}	\norm{\partial_t u}_{L^2}(t_k)\to 0.
\end{equation*}
For simplicity, we denote $u(t_k)$ by $u_k$.

Since $\mathcal E(u_k)$ is bounded and the $\varepsilon-$regularity (Theorem \ref{thm:regularity}) holds, the usual blow-up analysis works. Assume that there are $l$ bubbles $\omega_i(i=1,\cdots,l)$, which is the limit of $u_k(\lambda_{i,k} x + x_{i,k})$ and $m(m<l)$ blow-up points $p_i$. Let $\Omega_{i,k}$ and $N_{i,k}$ as before. We still have
\begin{eqnarray*}
	W(u_k)&\leq& \sum_{i=1}^l \sup_{y,z\in \Omega_i} d_{(\tilde{M},\tilde{g})}(\tilde{u}_k(\lambda_{i,k}y+x_{i,k}),\tilde{u}_k(\lambda_{i,k}z+x_{i,k})) \\
	&& + \sup_{y,z\in N_{i,k}} d_{(\tilde{M},\tilde{g})}(\tilde{u}_k(y),\tilde{u}_k(z)) \\
	&& + \sup_{y,z\in \Real^4 \setminus \bigcup_{i=1}^m B_\delta(p_i)} d_{(\tilde{M},\tilde{g})}(\tilde{u}_k (y), \tilde{u}_k(z)).
\end{eqnarray*}
By Theorem \ref{thm:noneck}, we can bound the right hand side by
\begin{equation*}
	\sum_{i=1}^l W(\omega_i) + W(u_\infty)+1.
\end{equation*}
By Lemma \ref{lem:finiteR}, each $W(\omega_i)$ and $W(u_\infty)$ is bounded by a constant $C_2$ depending on $C_1$. Moreover, the number of bubbles is also bounded by a constant depending on $C_1$. Hence, there is a constant $C_4$ such that
\begin{equation*}
	\limsup_{k\to \infty} W(u_k) < C_4.
\end{equation*}
This would be a contradiction and hence proves Theorem \ref{thm:main} if $C_4<C_3$.

Now let's show how to construct $u_0$.

Recall that $M=M'\# T^m$. There is a natural cover of $M$, which is obtained by modifying $\Real^m$. $\Real^m$ is the universal cover of $T^m$, with the deck transformation group $G=\mathbb Z^m$. Let $p_0$ be any point of $\Real^m$ and let the orbit of the action of $G$ containing $p_0$ be $\set{p_i}_{i=0}^\infty$. Suppose $U_i$ be a small neighborhood of $p_i$ diffeomorphic to the ball of dimension $m$ and $V\subset M'$ be an open set diffeomorphic to a ball. For each $i=0,1,\cdots$, we remove $U_i$ from $\Real^m$ and identify the boundary of $U_i$ with the boundary of a copy of $M'\setminus V$, which we denote by $W_i$. The new complete non-compact manifold is denoted by $\tilde{M}$. $G$ acts on $\tilde{M}$ naturally and the quotient is $M$. If $M$ is equipped with a Riemannian metric $g$ and $\tilde{g}$ is the pull back metric, then the projection $\pi:\tilde{M}\to M$ is isometric map.

Since $\pi_4(M')$ is not trivial, there is a smooth map $h:S^4\to M'$, which is not homotopic to constant map. Since $m>4$ and $h$ is not surjective, assume by deforming it smoothly that

(1) $h(S^4)\subset M'\setminus \bar{V}$;

(2) $h$ maps the entire southern hemisphere to a single point $q\in M'\setminus \bar{V}$.

Let $h_i$ be the copy of $h$ from $S^4$ to $W_i$ and $q_i$ be the copy of $q$ in $W_i$.

For any $C_3$, pick $i$ such that
\begin{equation*}
	d_{(\tilde{M},\tilde{g})}(W_0,W_i) >C_3.
\end{equation*}
Let $\Psi (\Phi)$ be the stereoprojection from $\Real^4$ to $S^4$, which maps the infinity to the south (north) pole and maps $\partial B_1$ to the equator. Consider the map $w:\Real^4\to W_i$ defined by
\begin{equation*}
	w(x)=h_i \circ \Psi (x).
\end{equation*}
$w$ is a constant map outside $B_1$. Set
\begin{equation*}
	C_1= E(w) + E(h) +1.
\end{equation*}
We claim that for $\sigma$ very small, we can find smooth $u_0$ satisfying

(1)
\begin{equation*}
	u_0=
	\left\{
		\begin{array}[]{ll}
			\pi\circ h_0(x) & \quad x\in S^4\setminus B_\sigma(S);\\
			\pi\circ w(\frac{\Phi^{-1}(x)}{\sigma^2/2}) & \quad x\in B_{\sigma^2}(S).
		\end{array}
		\right.
\end{equation*}

(2) $E(u_0)< C_1$.

By the above definition, we observe that $u_0|_{\partial B_\sigma (S)}=q_0$ and $u_0|_{\partial B_{\sigma^2}(S)}=q_i$. The first observation follows trivially from the definition of $h_0$. For the latter, we notice that $\abs{\Phi^{-1}(x)}$ is almost $\sigma^2$ for every $x\in \partial B_{\sigma^2}(S)$, because $\Phi$ is almost an isometry near $S$ and $\sigma$ is going to be small.

Since the energy is scaling invariant and $\Phi$ is almost isometric in small neighborhood of the south pole, we have
\begin{equation*}
	\left(\int_{S^4\setminus B_\sigma(S)} + \int_{B_{\sigma^2}(S)}\right) \abs{\triangle u_0}^2 dv < E(h)+ E(w)+\frac{1}{2}.
\end{equation*}

It suffices to show that we can define $u_0$ on $B_\sigma(S)\setminus B_{\sigma^2}(S)$ so that $u_0$ is smooth and the contribution to the energy on this part is smaller than $\frac{1}{2}$. By choosing $\sigma$ small, the metric of $S^4$ on $B_\sigma$ is close to the flat metric. Hence, it suffices to check this with flat metric.

Let $\gamma:[0,1]\to \tilde{M}$ be the shortest geodesic in $\tilde{M}$ connecting $q_0$ to $q_i$. Let $\varphi:[0,1] \to [0,1]$  be a smooth function satisfying

(1) $\varphi'\geq 0$;

(2) $\varphi(x)=0$ for all $0\leq x\leq \frac{1}{8}$ and $\varphi(x)=1$ for all $\frac{7}{8}\leq x\leq 1$;

(3) $\abs{\varphi'}+ \abs{\varphi''}\leq C$ for some universal constant $C$.

Set
\begin{equation*}
	u_0(x)= \pi\circ \gamma\circ \varphi \left( \frac{\log \sigma- \log \abs{x}}{ - \log \sigma} \right).
\end{equation*}
For simplicity, we write $L$ for $d_{(\tilde{M},\tilde{g})}(q_0,q_i)$.
Note that
\begin{equation*}
	\abs{(\pi\circ \gamma)'}= {L}.
\end{equation*}
Since $\gamma$ and $\pi\circ \gamma$ are geodesics, we have
\begin{equation*}
(\pi\circ \gamma)''+ B(\pi\circ \gamma)( (\pi\circ \gamma) ',(\pi\circ \gamma)')=0	
\end{equation*}
where $B$ is the second fundamental form of $N$. Therefore,
\begin{equation*}
	\abs{(\pi\circ \gamma)''}= {C}{L^2}.
\end{equation*}
We estimate the derivative of $u_0$ as follows.
\begin{equation*}
	\abs{\partial_r u_0}\leq \frac{CL}{ r (-\log \sigma)}
\end{equation*}
and
\begin{equation*}
	\abs{\partial_r^2 u_0}\leq \frac{CL}{ r^2 (-\log \sigma)}.
\end{equation*}
Hence,
\begin{eqnarray*}
	&& \int_{B_\sigma\setminus B_{\sigma^2}} \abs{\triangle u_0}^2 dx \\
	&\leq& C \int_{\sigma^2}^\sigma \abs{\partial_r^2 u_0 +\frac{3}{r}\partial_r u_0}^2 r^3 dr \\
	&\leq& \frac{CL^2 }{(\log \sigma)^2} \int_{\sigma^2}^\sigma \frac{1}{r} dr \\
	&\leq& \frac{CL^2 }{(-\log \sigma)}.
\end{eqnarray*}
For any $L$, we can choose $\sigma$ so that the above is as small as we want. Hence, we check that $u_0$ satisfies $E(u_0)< C_1$. It remains to check that for any map $u'$ homotopic to $u_0$, $W(u')> C_3$. Let $\tilde{u}'$ be the lift of $u'$, which is homotopic to the following lift of $u_0$,
\begin{equation*}
	\tilde{u}_0=\left\{
		\begin{array}[]{ll}
			h_0(x) & \quad x\in S^4\setminus B_\sigma(S);\\
			\gamma\circ \varphi (\frac{\log \sigma -\log \abs{x}}{-\log \sigma}) & \quad x\in B_\sigma(S)\setminus B_{\sigma^2}(S); \\
			w(\frac{\Phi^{-1}(x)}{\sigma^2/2}) & \quad x\in B_{\sigma^2}(S).
		\end{array}
		\right.
\end{equation*}
We claim that $\tilde{u'}\cap W_0\ne \emptyset$ and $\tilde{u'}\cap W_i\ne \emptyset$. To see this, consider a continuous map $\tilde{\pi}$ from $\tilde{M}$ to $M'$ (precisely, a manifold homeomorphic to $M'$), which maps any point in $M\setminus W_0$ to one point. If $\tilde{u}'\cap W_0$ is empty, then $\tilde{\pi}\circ \tilde{u}'$ is a constant map. However, $\tilde{\pi}\circ \tilde{u}_0$ is homotopic to $h_0$ and hence is nontrivial. The proof for $\tilde{u'}\cap W_i\ne \emptyset$ is the same.

In summary, we have constructed a map $u_0$ such that $E(u_0)<C_1$ and $W(u')>C_3$ for any $u'$ homotopic to $u_0$. This finishes the proof of Theorem \ref{thm:main}.


\begin{thebibliography}{}
\bibitem{adams}
		Adams, R. and Fournier, J.,
		\newblock{Sobolev spaces},
		\newblock{Second edition. Pure and Applied Mathematics (Amsterdam), 140. Elsevier/Academic Press, Amsterdam, 2003}.
	\bibitem{BLamm}
		Breiner, C. and Lamm, T.,
		\newblock{Compactness results for sequences of approximate biharmonic maps},
		\newblock{arXiv:1312.5787}.
    \bibitem{CDY}
    Chang, K.-C., Ding, W.-Y. and Ye, R.,
    \newblock{Finite-time blow-up of the heat flow of harmonic maps from surfaces},
    \newblock{J. Differential Geom.}, {\bf 36(2)} (1992) 507-515.

    \bibitem{CL}
    Chen, J.-Y. and Li, Y.-X.,
    \newblock{Homotopy classes of harmonic maps of the stratified $2-$spheres and applications to geometric flows},
    \newblock{arXiv:1311:3022}.

    \bibitem{DT}
	    Ding, W.-Y. and Tian G.,
	    \newblock{Energy identity for a class of approximate harmonic maps from surfaces},
	    \newblock{Comm. Anal. Geom.}, {\bf 3} (1995), 543-554.

    \bibitem{Ga}
    Gastel, A.,
    \newblock{The extrinsic polyharmonic map heat flow in critical dimensions},
    \newblock{Adv. Geom.}, {\bf 6(4)} (2006) 501-521.

    \bibitem{Lamm}
    Lamm, T.,
    \newblock{Heat flow for extrinsic biharmonic maps with small initial energy},
    \newblock{Ann. Global Anal. Geom.}, {\bf 26(4)} (2004) 369-384.

    \bibitem{LW}
    Li, Y.-X. and Wang, Y.-D.,
    \newblock{An example of $\alpha-$harmonic map sequence for which energy identity is not true},
    \newblock{arXiv:1303.2862}.

    \bibitem{ours}
    Liu, L. and Yin, H.,
    \newblock{Neck analysis of biharmonic maps},
    \newblock{arXiv:1312.4600}.

    \bibitem{QT}
    Qing, J. and Tian, G.,
    \newblock{Bubbling of the heat flows for harmonic maps from surfaces},
    \newblock{Comm. Pure Appl. Math.}, {\bf (50)} 1997 295-310.

    \bibitem{Struwe}
	Struwe, M.,
	\newblock{On the evolution of harmonic mappings of Riemannian surfaces},
	\newblock{Comment Math. Helv.}, {\bf 60} (1985) 558-581.

    \bibitem{topping}
    Topping, P.,
    \newblock{Winding behaviour of finite-time singularities of the harmonic map heat flow},
    \newblock{Math. Z.}, {\bf 247} (2004) 279-302.

    \bibitem{W04}
    Wang, C.-Y.,
	    \newblock{Biharmonic maps from $\Real^4$ into a Riemannian manifold},
	    \newblock{Math. Z.}, {\bf (247)} 2004 65-87.

    \bibitem{Wsphere}
    Wang, C.-Y.,
	    \newblock{Remarks on biharmonic maps into spheres},
	    \newblock{Calc. Var. PDE}, {\bf (21)} 2004 221-242.

    \bibitem{W07}
    Wang, C.-Y.,
    \newblock{Heat flow of biharmonic maps in dimension four and its applications},
    \newblock{Pure Appl. Math. Q.}, {\bf 3(2)} (2007) 595-613.


\end{thebibliography}
\end{document}